\newcommand{\N}{\mathbb{N}}
\newcommand{\im}{\mathrm{im}}
\renewcommand{\to}{\rightarrow}
\newcommand{\imod}[1]{\allowbreak\mkern4mu({\operator@font mod}\,\,#1)}
\newtheorem{theorem}{Theorem} 
\newtheorem*{conj*}{Conjecture}
\newtheorem{thm}{Theorem}[section] 
\newtheorem{prop}[thm]{Proposition} 
\newtheorem{lem}[thm]{Lemma}
\theoremstyle{definition}
\newtheorem{rem}[thm]{Remark}
\setlist[enumerate]{itemsep=3pt,topsep=3pt}
\setlist[enumerate,1]{label=\textup{(\roman*)}}
\setlist[enumerate,2]{label=\textup{(\alph*)}}
\begin{document}

\author{Donna M. Testerman}
\address{D.~M.~Testerman, Ecole Polytechnique F\'ed\'eral de Lausanne,
Institute of Mathematics, Station 8, CH-1015 Lausanne, Switzerland}
\email{donna.testerman@epfl.ch}
 
\author{Adam R. Thomas}
\address{A.~R.~Thomas, Mathematics Institute, Zeeman Building, University of Warwick, Coventry CV4 7AL, UK}
\email{adam.r.thomas@warwick.ac.uk}

\setlength{\parindent}{0pt}
\setlength{\parskip}{6pt}

\title{Epimorphic subgroups of simple algebraic groups}

\begin{abstract}
A morphism of linear algebraic groups $\phi:K\rightarrow G$ is called an epimorphism if it admits right cancellation. A subgroup $H\leq G$ is epimorphic if the inclusion map is an epimorphism. For $G$ a simple algebraic group over an algebraically closed field of arbitrary characteristic we construct epimorphic subgroups of bounded dimension (at most five).  
\end{abstract}

\maketitle

\section{Introduction}\label{sec:intro}
Let $\phi:K \rightarrow G$ be a morphism of linear algebraic groups defined over an algebraically closed field. The map $\phi$ is an epimorphism if it admits right cancellation, i.e. whenever $\psi_1\circ\phi=\psi_2\circ\phi$ for
morphisms $\psi_1,\psi_2$ we have $\psi_1=\psi_2$. 
An \emph{epimorphic subgroup} $H\leq G$ is a subgroup for which the inclusion map is an epimorphism. 

It follows from \cite{BB1} that the maximal dimensional epimorphic subgroups are maximal parabolic subgroups. The current state of knowledge on minimal dimensional epimorphic subgroups is less satisfactory, although it does follow from \textit{ibid.} that such subgroups are solvable. When $G$ is a simple algebraic group of rank at least $2$, the minimal dimension of an epimorphic subgroup appears to be $3$. If the ground field is $\mathbb C$, this is shown to hold by Bien and Borel in \cite{BB1}. In \cite{SimTe}, Simion and the first author consider rank 2 groups over fields of positive characteristic, establishing the same lower bound of 3, and showing it to be sharp. Our main result yields proper epimorphic subgroups of bounded dimension (at most five) in simple algebraic groups defined over algebraically closed fields of positive characteristic. 

\begin{theorem} \label{mainthm} Let $G$ be a simple algebraic group defined over an algebraically closed field of characteristic $p > 0$. Then there exists a proper epimorphic subgroup $H$ of $G$ with $\dim H\leq 5$.
\end{theorem}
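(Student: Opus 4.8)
The plan is to combine the standard characterization of epimorphic subgroups with the classification of subgroups containing a regular unipotent element. Recall (from \cite{BB1}) that $H\leq G$ is epimorphic if and only if $V^H=V^G$ for every finite-dimensional rational $G$-module $V$; equivalently, the observable hull of $H$ equals $G$, i.e. $H$ lies in no proper observable subgroup. I will use repeatedly that every parabolic subgroup $P$ is epimorphic: for $v\in V^P$ the orbit map $G/P\to V$ is a morphism from the complete variety $G/P$ to an affine space, hence constant, so $v\in V^G$. Since a proper observable subgroup $M$ is never epimorphic (there is a $G$-module $V$ with $V^M\neq V^G$), and since by Borel--Tits a maximal closed connected subgroup of $G$ is either parabolic (never observable) or reductive, the task reduces to constructing a low-dimensional $H$ that is contained in no proper reductive subgroup, and in no observable subgroup of a Levi.

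First I would fix a maximal torus $T$, a Borel $B=TU$ and the simple roots $\Delta$, and take the cocharacter $\gamma$ with $\langle\alpha,\gamma\rangle=1$ for all $\alpha\in\Delta$, setting $S=\gamma(\mathbb{G}_m)$. Let $e=\sum_{\alpha\in\Delta}e_\alpha$ be a regular nilpotent element and $X$ the associated one-dimensional unipotent subgroup (the image of a homomorphism $\mathbb{G}_a\to U$ with differential $e$). Since $\mathrm{Ad}(\gamma(s))\,e=s\,e$, the torus $S$ normalizes $X$, so $SX$ is a two-dimensional solvable subgroup whose unipotent part contains a regular unipotent element $u$ of $G$. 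The subgroup $SX$ is not yet epimorphic, as it lies inside the principal $A_1$ subgroup $\langle X,X^-\rangle$, a proper reductive (hence observable) overgroup. The idea is therefore to enlarge $SX$ by a bounded number of further root elements so as to escape this and every other reductive overgroup while keeping $\dim H\leq 5$.

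The key input is the classification, due to Testerman--Zalesski, of the connected reductive subgroups of a simple algebraic group that contain a regular unipotent element: apart from $G$ these form a short explicit list (principal $A_1$, together with a handful of embeddings such as $G_2<B_3,D_4$, $B_n<D_{n+1}$, $C_n<A_{2n-1}$, and so on). Any proper reductive $M\supseteq H$ would contain $u$, hence must be one of these; I would then adjoin one or two further generators---root elements $u_\beta(1)$, or a second one-parameter unipotent subgroup not contained in the principal $A_1$---so that $H:=\langle S,X,\text{(these elements)}\rangle$ embeds in none of the listed subgroups, the choice being made uniformly in the type of $G$. A dimension count then yields $\dim H\leq 5$, with the torus contributing $1$ and the unipotent part $H\cap U$ at most $4$. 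It remains to rule out non-reductive proper observable overgroups; these lie in proper parabolics $P$, and because the image of $u$ in the Levi $L=P/R_u(P)$ is again a regular unipotent element, an induction on rank (or a direct projection argument) reduces this to the reductive case already handled.

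The main obstacle I anticipate is positive characteristic. In small characteristic the regular nilpotent $e$ need not integrate to a one-parameter subgroup in the naive way, the principal $A_1$ may fail to exist or be replaced by a non-$G$-completely-reducible configuration, and the Testerman--Zalesski list is itself characteristic-dependent; moreover the commutator relations among the added root elements, which control $\dim(H\cap U)$, depend on $p$ through the Chevalley structure constants, so maintaining the uniform bound $5$ requires care. I expect these difficulties to force a division into cases by Lie type and by the size of $p$ relative to the Coxeter number, with explicit constructions in the finitely many exceptional situations, the verification in each case being the fixed-point computation $V^H=V^G$ on the relevant minimal modules.
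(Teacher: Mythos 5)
There is a genuine gap, and it is fatal to the approach for precisely the range of characteristics that makes Theorem \ref{mainthm} hard. Your construction starts from a one-dimensional unipotent subgroup $X$, the image of a homomorphism $\mathbb{G}_a\to U$ with differential the regular nilpotent $e=\sum_{\alpha\in\Delta}e_\alpha$, and asserts that $SX$ contains a regular unipotent element $u$. No such $X$ exists when $p<h(G)$: any element $\prod_{\gamma>0}x_\gamma(c_\gamma)\in U$ whose coefficients on all simple roots are nonzero is regular unipotent, so the image of any such homomorphism would contain regular unipotent elements for generic parameter values; but regular unipotent elements have order $p^{\lceil\log_p h\rceil}>p$ when $p<h(G)$, while every element of $X\cong\mathbb{G}_a$ has order dividing $p$. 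For the same reason there is no principal $A_1$ in this range, so the Testerman--Zalesski classification of reductive overgroups of regular unipotent elements cannot even be brought to bear. Your final paragraph flags exactly this as an anticipated difficulty, but offering no remedy means the proposal only covers $p\geq h(G)$ --- and in that range your sketch is essentially the paper's proof of Theorem \ref{mainthm2} (unique $A_1$ class through a regular unipotent, its known reductive overgroups, adjoin a root group $Y$ normalised by a Borel of $J$ to escape them). The whole point of Theorem \ref{mainthm} is the complementary case $p<h(G)$.

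Two further points. First, even where your construction makes sense, taking $H=\langle S,X,u_\beta(1),\dots\rangle$ as a generated closed subgroup gives you no control on $\dim H$ or on properness: escaping every reductive overgroup while staying proper and at most five-dimensional is exactly the tension, and generation arguments alone cannot resolve it. The paper sidesteps this by never taking $H$ to be a generated group: it arranges $\langle J,Y\rangle=G$ deliberately, but then takes $H=B_JY$ (a solvable product, with $Y$ normalised by the Borel $B_J$ of $J$), whose epimorphicity follows from the Bien--Borel generation criterion (Proposition \ref{prop:BBmethod}), not from an analysis of observable overgroups of $H$ itself. Second, for $p<h(G)$ the paper's replacement for the regular unipotent element is a \emph{family} of diagonal $A_1$-subgroups $J_a$ of a subsystem $A_1^l$, embedded with Frobenius twists $1,p^a,p^{2a},\dots$, together with compatible one-dimensional unipotent groups $Y_a$; irreducibility of $\langle J_a,Y_a\rangle$ on the natural module, a Vandermonde-determinant lemma forcing any common overgroup of infinitely many of the $J_a$ to have full rank, finiteness of the number of classes of maximal connected reductive subgroups, and the Liebeck--Seitz--Testerman/Seitz/Thomas classification results then show that some member of the family satisfies $\langle J_b,Y_b\rangle=G$. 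Nothing in your proposal plays the role of this limiting argument over the family of twists, which is the key new idea needed once regular unipotent elements are unavailable.
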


In many cases we prove the existence of an epimorphic subgroup of dimension less than five; the precise dimensions in these cases are as in the following table. 

\begin{table}[ht]
\begin{tabular}{| c | c |}
\hline
$\dim H$ & $G$ \\
\hline
2 & $A_1$ \\
3 & $A_2$, $B_2$, $B_l$ $(p = 2)$, $C_l$, $D_l$ ($l$ even or $l \equiv 3 \pmod 6$), $G_2$, $F_4$, $E_6$ $(p \neq 2)$, $E_7$, $E_8$  \\
4 & $A_l$ $(l \geq 3)$, $B_l$ $(l \geq 3, p \neq 2)$, $E_6$ $(p = 2)$ \\
\hline
\end{tabular}
%\caption{$H$ epimorphic in $G$}
\label{tab:dimensions}
\end{table}

% \begin{table}[ht]
% \begin{tabular}{| c | c |}
% \hline
% $G$ & $\dim H$ \\
% \hline
% $A_l$ & 4 \\
% $B_l$, $p \neq 2$ & 4 \\
% $B_l$, $p=2$ & 3 \\
% $C_l$ & 3 \\
% $D_l$, $l$ even & 3 \\
% $D_l$, $l \equiv 3 \pmod 6$ & 3 \\
% $G_2$ & 3 \\
% $F_4$ & 3 \\
% $E_6$ $(p \neq 2)$ & 3 \\
% $E_6$ $(p =2)$ & 4 \\
% $E_7$ & 3 \\
% $E_8$ & 3 \\
% \hline
% \end{tabular}
% %\caption{$H$ epimorphic in $G$}
% \label{tab:dimensions}
% \end{table}

It is often the case when considering the subgroup structure of algebraic groups that restricting attention to ``large enough'' characteristics yields stronger results akin to those in characteristic $0$. We are able to make this concrete in the next result.

\begin{theorem} \label{mainthm2} Let $G$ be a simple algebraic group defined over an algebraically closed field of characteristic $p \geq h(G)$, the Coxeter number of $G$. Then there exists an epimorphic subgroup $H$ of $G$ of dimension three.
\end{theorem}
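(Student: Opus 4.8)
The plan is to combine the representation-theoretic criterion for epimorphicity with a single uniform construction based on a regular nilpotent element. Recall from \cite{BB1} that a closed subgroup $H\leqs G$ is epimorphic if and only if $V^{H}=V^{G}$ for every finite-dimensional rational $G$-module $V$. I will take $H=S\ltimes U'$, where $S=\l(\mathbb{G}_m)$ is the one-dimensional torus attached to the principal cocharacter $\l=2\rho^{\vee}$, so that $\la\alpha,\l\ra=2\operatorname{ht}(\alpha)$ for every root $\alpha$, and $U'$ is a two-dimensional unipotent subgroup normalised by $S$. As $H=SU'$ we have $V^{H}=(V^{U'})^{S}$, and the whole problem becomes the choice of $U'$ so that no $U'$-fixed vector of $S$-weight zero escapes being $G$-fixed.

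For the unipotent part I would take $U'=\exp\la x,y\ra$ with $x=\sum_{i}e_{\alpha_i}$ a regular nilpotent element and $y=e_{\alpha_0}$ the highest root vector. Since $\alpha_i+\alpha_0$ is never a root, $x$ and $y$ commute, so $\la x,y\ra$ is a two-dimensional abelian subalgebra and $U'$ is a genuine abelian unipotent subgroup; both $x$ and $y$ are $S$-weight vectors (of weights $2$ and $2(h-1)$), so $S$ normalises $U'$ and $\dim H=3$. This shape is essentially forced: on the adjoint module a subgroup of the form $(\text{torus})\ltimes\prod_{\g}U_{\g}$ fixes $\bigcap_{\g}\ker\g\subseteq\mathfrak{t}$, so a product of fewer than $l$ root groups is never epimorphic, whereas a single regular nilpotent has centraliser meeting $\mathfrak{t}$ trivially and so detects the whole Cartan on its own. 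This is exactly what frees the dimension of $U'$ from the rank. Moreover any two-dimensional subgroup normalised by $S$ with nonzero weights must be abelian, and the centraliser of $x$ lies in the positive part with weights equal to the exponents of $G$; taking $y=e_{\alpha_0}$ uses the top exponent $h-1$. (For $G$ of type $A_1$ the construction degenerates, $y=x$, and one falls back on a two-dimensional Borel subgroup as in the table; assume $l\geqs 2$ from now on.)

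The core of the proof is the equality $V^{H}=V^{G}$. Here the hypothesis $p\geqs h$ does the essential work: the principal $\mathfrak{sl}_2$-triple with nilpositive element $x$ exists and is well behaved, by Jacobson--Morozov and the theory of the principal $A_1$, and the $\l$-grading of $\mathfrak{g}$ is faithful modulo $p$ because all heights lie in $[-(h-1),h-1]$ with $h-1<p$. Reducing to restricted irreducible modules through the Steinberg tensor product theorem -- which bounds the relevant weights and lets one pass between $U_{\g}$-fixed points and the kernel of $e_{\g}$ -- one has $V^{U'}=\ker x\cap\ker y$, and in the characteristic-zero model $\ker x\cap V_{(0)}$, the $x$-annihilated part of the $S$-weight-zero space, is precisely the fixed space $V^{P}$ of the principal $SL_2$ subgroup $P$: a highest weight vector for this $\mathfrak{sl}_2$ has $S$-weight zero exactly when it spans a trivial summand. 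The statement to be proved thus reduces to
\[ V^{P}\cap\ker e_{\alpha_0}=V^{G}\qquad\text{for all }V. \]

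The main obstacle is this final identity, together with its transport from characteristic zero to $p\geqs h$. One must show that the single highest-root operator $e_{\alpha_0}$ annihilates nothing in the principal $SL_2$-fixed space beyond the genuinely $G$-invariant vectors, uniformly over all types and all $V$. The principal $SL_2$ is in general not epimorphic, so $V^{P}$ can be nonzero; the content of the theorem is that a single extra unipotent direction, the highest root vector, repairs this defect. I expect the verification to rest on understanding $V^{P}$ as a module for $N_{G}(P)$ and on the position of $\alpha_0$ relative to the exponents, with $p\geqs h$ invoked throughout to keep the modular representation theory aligned with the characteristic-zero picture of Bien--Borel, and with the failure of complete reducibility for the principal $SL_2$ on non-restricted modules being the principal technical difficulty to be controlled.
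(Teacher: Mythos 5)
Your construction is, in spirit, the same subgroup the paper uses --- the Borel $S\exp(kx)$ of the principal $A_1$ together with one extra root direction --- but your uniform choice $y=e_{\alpha_0}$ is not only unproven at its crucial step (you explicitly leave the identity $V^{P}\cap\ker e_{\alpha_0}=V^{G}$ as something you ``expect'' to verify); it is actually \emph{false} for several types, and this is exactly where the type-by-type work in the paper is unavoidable. The identity fails whenever the principal $A_1$ and the highest root group have a common proper reductive overgroup $M$: then $H\leq M$, the variety $G/M$ is affine of positive dimension, so $k[G/H]\supseteq k[G/M]\neq k$ and $H$ cannot be epimorphic. This happens for $A_{2m-1}$ (with $M$ of type $C_m$), for $D_l$ (with $M$ of type $B_{l-1}$), for $B_3$ (with $M$ of type $G_2$) and for $E_6$ (with $M$ of type $F_4$). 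A concrete counterexample: $G=\mathrm{SL}_4$, $p\geq 5=h+1$, $x=E_{12}+E_{23}+E_{34}$, $S=\{\mathrm{diag}(t^3,t,t^{-1},t^{-3})\}$, $y=e_{\alpha_0}=E_{14}$. All three preserve the symplectic form pairing $e_1$ with $e_4$ and $e_2$ with $e_3$ (the invariant form of the irreducible $4$-dimensional module of the principal $\mathrm{SL}_2$), so $H\leq \mathrm{Sp}_4$; the invariant bivector $\omega\in V=\Lambda^2(k^4)$ lies in $V^{P}\cap\ker e_{\alpha_0}$ and is fixed by $H$, yet $V^{G}=0$. The source of the failure is a sign: the graph involution of $A_{2m-1}$ whose fixed-point group is symplectic \emph{fixes} $e_{\alpha_0}$ (the orthogonal one negates it), so $U_{\alpha_0}$ sits inside $\mathrm{Sp}_{2m}$; likewise $U_{\alpha_0}\leq B_{l-1}<D_l$, $U_{\alpha_0}\leq G_2<B_3$ and $U_{\alpha_0}\leq F_4<E_6$.

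This is precisely why the paper, after fixing $J$ to be the unique class of $A_1$'s containing regular unipotent elements, does not take $Y=U_{\alpha_0}$ uniformly: it invokes the classification of reductive overgroups of $J$ from \cite{TeZ}, notes that $U_{\alpha_0}$ works when $J$ is maximal (types $B_l$ with $l\geq 4$, $C_l$, $F_4$, $E_7$, $E_8$) and, with an appropriate choice, for $A_l$ with $l$ even, but for $A_l$ odd, $D_l$, $B_3$ and $E_6$ it replaces $U_{\alpha_0}$ by a twisted one-parameter group of the form $\{x_\gamma(t)x_\delta(ct)\mid t\in k\}$, chosen to be normalised by $B_J$ while lying \emph{outside} the fixed-point subgroup $M$. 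Moreover, the paper never verifies the fixed-point criterion $V^{H}=V^{G}$ directly (your reduction of $V^{U'}$ to $\ker x\cap\ker y$ via Steinberg's tensor product theorem is itself delicate for Frobenius-twisted modules); instead it deduces epimorphicity from generation: once $\langle J,Y\rangle=G$, result (e) of \cite{BB1} shows $B_JY$ is epimorphic (Proposition~\ref{prop:BBmethod}). So to repair your argument you would need both a proof, not an expectation, of the key identity in the cases where it does hold, and a different choice of $y$ in the four families above --- at which point you have reconstructed the paper's case division.
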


A study of epimorphic subgroups in linear algebraic groups was initiated by Bien and Borel in \cite{BB1,BB2} where they establish criteria for recognising epimorphic subgroups. These results were extended in \cite{Brion} to the setting of group schemes of finite type over any field.  

A strong motivation for Bien and Borel came from algebraic geometry. Together with Koll\'ar, in \cite{BBK} they used epimorphic subgroups to construct certain generically rationally connected homogeneous spaces. Their construction depends on having an epimorphic subgroup of a specific form (see \cite[Propositions~4.1 and 4.2]{BBK}). In all cases, our construction does in fact yield an epimorphic subgroup $H$ such that $G/H$ is a generically rationally connected homogeneous space for $G$.

\section{Preliminaries} \label{sec:prelims}

\subsection{Notation}

Let $k$ be an algebraically closed field of characteristic $p \geq 0$ and $k^*$ be its multiplicative group of nonzero elements. Throughout the paper all algebraic groups are defined over $k$ and all subgroups are assumed to be closed. An \textit{overgroup} of a subgroup $X \leq G$ is defined to be a proper subgroup $H\leq G$ such that that $X \leq H$. 

Let $X$ be a semisimple group. We let $\Phi$ be the root system of $X$ with respect to some maximal torus $T_X$ and Borel subgroup $B_X$. For a simple algebraic group $G$ we will simply write $T,B$ for such subgroups. Let $\Pi = \{ \alpha_1, \ldots, \alpha_l \} \subseteq \Phi$ be a base of simple roots and let $\{\lambda_1,\ldots,\lambda_{l}\}$ be the corresponding fundamental dominant weights, with the Bourbaki ordering \cite[Ch.\ VI, Planches I-IX]{Bourb4-6}. The notation $a_1 a_2 \ldots a_l$ will indicate a root $\sum a_{i}\alpha_{i}$ or a weight $\sum a_{i}\lambda_{i}$ (context will prevent ambiguity). Root elements and groups with respect to these fixed choices are denoted $x_\gamma(t)$, $U_{\gamma}$, respectively for $\gamma \in \Phi$ and $t \in k$. The highest root of $\Phi$ is denoted $\alpha_0$. Whenever we say subsystem subgroup we mean a semisimple subgroup normalised by the maximal torus $T$ of $G$. We use $L(X)$ for the Lie algebra of $X$ and fix a choice of root elements $e_\gamma \in L(X)$ for $\gamma \in \Phi$. 

For a dominant weight $\lambda$ we let $V_{X}(\lambda)$ denote the irreducible $X$-module of highest weight $\lambda$. When $X$ is clear from the context we simply write $\lambda$ for the module; in particular $0$ denotes the trivial irreducible $X$-module. The corresponding Weyl module for $X$ is denoted $W(\lambda)$ or $W_{X}(\lambda)$, and the tilting module is denoted $T(\lambda)$ or $T_{X}(\lambda)$. When $X$ is of type $A_1$ there is a single fundamental dominant weight $\lambda$ and we use the integer $n$ for the weight $n \lambda$. If $X = X_1 X_2 \cdots X_r$ is a commuting product of simple algebraic groups then $(V_1, \ldots, V_r)$ denotes the $X$-module $V_1 \otimes \cdots \otimes V_r$, where $V_i$ is an irreducible $X_i$-module for each $i$.

Suppose $p > 0$. Let $F{:} \ X \to X$ be a Frobenius endomorphism which acts on root elements via $x_{\alpha}(t) \mapsto x_{\alpha}(t^p)$ for $\alpha \in \Phi$, $t \in k$. If $V$ is an $X$-module afforded by a representation $\rho : X \rightarrow GL(V)$ then $V^{[p^r]}$ denotes the module afforded by $\rho^{[r]} := \rho \circ F^r$. Let $M_1, \ldots, M_k$ be $X$-modules. Then $V = M_1 | \ldots | M_r$ denotes an $X$-module with socle series $V = V_{1} > V_{2} > \cdots > V_{r+1} = \{0\}$, so that $\textrm{Socle}(V/V_{i+1}) = V_{i}/V_{i+1} \cong M_{i}$ for $1 \le i \le r$. 

Finally, the natural numbers, denoted $\N$, start from $1$. 

%Consistency things to check:

%Do we have to say $T_a$-weight or $T_a$ weight in as many places??

%-nontrivial, nonzero etc

%-hyphens

%-$\Phi$ or $\Phi^+$

%-high weight to highest weight

%-$A_1$-subgroup

\subsection{General Strategy and Proof of Theorem \ref{mainthm2}}

The following result allows us to choose the isogeny type of $G$ when proving Theorems \ref{mainthm} and \ref{mainthm2}. 

\begin{lem}\cite[Proposition~6]{SimTe}\label{lem:isogenies}
Let $\phi:G_1\rightarrow G_2$ be a surjective homomorphism of algebraic groups. 
\begin{enumerate}
\item If $H_1$ is an epimorphic subgroup of $G_1$ then $\phi(H_1)$ is epimorphic in $G_2$,
\item If $H_2$ is an epimorphic subgroup of $G_2$ then $\phi^{-1}(H_2)$ is epimorphic in $G_1$.
\end{enumerate}
\end{lem}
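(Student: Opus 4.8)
The plan is to argue directly from the definition of epimorphism, using the elementary observation that a surjective homomorphism of algebraic groups is itself an epimorphism: if $\psi_1 \circ \phi = \psi_2 \circ \phi$ with $\phi$ surjective, then $\psi_1$ and $\psi_2$ agree on $\phi(G_1) = G_2$ and so coincide. Throughout write $\iota_i \colon H_i \to G_i$ for the inclusions and $N = \ker \phi$.

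For part (1), suppose $H_1$ is epimorphic in $G_1$ and let $\psi_1, \psi_2 \colon G_2 \to L$ be morphisms agreeing on $\phi(H_1)$. First I would precompose with $\phi$: for $h \in H_1$ we have $\phi(h) \in \phi(H_1)$, so $(\psi_1 \circ \phi)(h) = (\psi_2 \circ \phi)(h)$, i.e. $\psi_1 \circ \phi$ and $\psi_2 \circ \phi$ agree on $H_1$. Since $H_1$ is epimorphic this forces $\psi_1 \circ \phi = \psi_2 \circ \phi$, and then surjectivity of $\phi$ yields $\psi_1 = \psi_2$. Hence $\phi(H_1)$ is epimorphic in $G_2$; this direction needs nothing beyond the definition.

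For part (2) the clean route is to pass through the Bien--Borel criterion \cite{BB1}: a subgroup $H \leq G$ is epimorphic if and only if $V^H = V^G$ for every finite-dimensional rational $G$-module $V$. (The forward implication uses the two homomorphisms $G \to G \ltimes V$, $g \mapsto (g,0)$ and $g \mapsto (g,\, g \cdot v - v)$, attached to an $H$-fixed vector $v$; the converse follows by noting that for representations $\psi_1, \psi_2$ agreeing on $H$ the vector $\mathrm{id}_W$ is an $H$-fixed, hence $G$-fixed, element of $\operatorname{Hom}(W_1, W_2)$.) Granting this, set $H_1 = \phi^{-1}(H_2)$, so that $N \leq H_1$ and $\phi|_{H_1}$ induces $H_1/N \cong H_2 = \phi(H_1)$ by surjectivity. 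Given a finite-dimensional rational $G_1$-module $V$, the fixed space $V^N$ is a $G_1$-submodule on which $G_1$ acts through $G_1/N \cong G_2$, hence a $G_2$-module. Because $N \leq H_1$ we get $V^{H_1} = (V^N)^{H_1} = (V^N)^{H_2}$ and $V^{G_1} = (V^N)^{G_2}$. Applying the criterion to the epimorphic pair $H_2 \leq G_2$ and the module $V^N$ gives $(V^N)^{H_2} = (V^N)^{G_2}$, whence $V^{H_1} = V^{G_1}$. As $V$ is arbitrary, the criterion then yields that $H_1$ is epimorphic in $G_1$.

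The main obstacle is concentrated entirely in part (2), namely in having the invariant-theoretic criterion at hand. If one instead insists on arguing straight from the definition, the natural attempt is to reduce modulo $N_L := \psi_1(N)$ (after arranging $N_L \trianglelefteq L$) so that the composites $\psi_1, \psi_2 \colon G_1 \to L \to L/N_L$ factor through $G_2$ and agree on $H_2$; epimorphicity of $H_2$ then forces them to be equal, giving only that $\psi_1$ and $\psi_2$ agree \emph{modulo} $N_L$. Upgrading this congruence to genuine equality is a nonabelian cocycle-vanishing problem that does not resolve cleanly at this level of generality. This is exactly why I would route part (2) through the module characterization, where passing to $N$-fixed points linearises the difficulty and the argument becomes a one-line comparison of invariants.
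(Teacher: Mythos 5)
Part (i) of your proof is correct and complete: precomposing with $\phi$ and then using surjectivity of $\phi$ is all that is needed. The gap is in part (ii), at the step ``the fixed space $V^N$ is a $G_1$-submodule on which $G_1$ acts through $G_1/N \cong G_2$, hence a $G_2$-module.'' Over a field of characteristic $p>0$ --- which is exactly the setting of this paper --- a surjective homomorphism $\phi\colon G_1\to G_2$ with reduced kernel $N$ does \emph{not} in general induce an isomorphism of algebraic groups $G_1/N\to G_2$: that map is a bijective morphism, but it may be purely inseparable. Consequently a rational $G_1$-module on which $N$ acts trivially need not admit any rational $G_2$-module structure compatible with $\phi$. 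The simplest counterexample is $\phi=F$, the Frobenius endomorphism of $G_1=G_2=\mathrm{SL}_2$: here $N=\{1\}$, so $V^N=V$ for every $V$, and your claim would assert that every rational $\mathrm{SL}_2$-module is a Frobenius twist --- false already for the natural module. Without this step, the expressions $(V^N)^{H_2}$ and $(V^N)^{G_2}$ are not even defined, so the appeal to the Bien--Borel fixed-point criterion collapses. Note that your closing paragraph locates the difficulty of part (ii) only in ``having the criterion at hand''; in fact the criterion is fine, and the real difficulty is this inseparable descent, which your argument does not see.

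This is not a peripheral case: the paper invokes part (ii) of the lemma precisely for inseparable maps, e.g.\ to deduce the $B_l$, $p=2$ case of Theorem~\ref{mainthm} from the $C_l$, $p=2$ case via the exceptional isogeny (bijective on points, not an isomorphism), and to choose the isogeny type of $G$ (the kernels of $\mathrm{Spin}_n\to\mathrm{SO}_n$ in characteristic $2$, or of $\mathrm{SL}_n\to\mathrm{PGL}_n$ when $p\mid n$, are non-reduced). Your argument is correct whenever $\phi$ is separable, in particular in characteristic $0$, and it can be repaired as follows. Factor $\phi$ as $G_1\to G_1/N\xrightarrow{\ \pi\ } G_2$; your descent does work for the first map (a rational representation killing the smooth normal subgroup $N$ factors rationally through $G_1/N$), so one is reduced to the case of a purely inseparable isogeny $\pi$. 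Such a $\pi$ divides a power of Frobenius, i.e.\ there is $\rho$ with $\rho\circ\pi=F^m$, and by part (i) applied to $\rho$ it suffices to treat $\phi=F^m$ itself; this case follows from the definition by twisting: if $\psi_1,\psi_2$ agree on $H$, then $\psi_1^{(p^m)},\psi_2^{(p^m)}$ agree on $F^m(H)$, hence are equal by epimorphicity of $F^m(H)$ in $G^{(p^m)}$, and then $F^m_L\circ\psi_1=F^m_L\circ\psi_2$ forces $\psi_1=\psi_2$ since $F^m_L$ is injective on points. (Alternatively, one can run all of part (ii) through the invariant-function criterion $k[G/H]=k$, using that $f^{p^m}\in\phi^*k[G_2]$ for suitable $m$.) For comparison with the paper: the paper does not prove this lemma at all but cites \cite[Proposition~6]{SimTe}, so there is no internal argument to measure against --- but as written, your part (ii) fails in the very situations for which the paper needs the statement.
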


Theorem \ref{mainthm} and \ref{mainthm2} are immediate for $G$ of type $A_1$ since a Borel subgroup of $G$ is a $2$-dimensional epimorphic subgroup. The main theorems hold for $G$ of rank two courtesy of \cite[Theorem~1]{SimTe}. Thus, we assume that $G$ is of rank at least three. 

Our main tool for establishing the existence of epimorphic subgroups is the following proposition.

\begin{prop}\label{prop:BBmethod} Let $J$ be a subgroup of type $A_1$ in $G$ and let $Y$ be a subgroup of $G$ normalised by a Borel subgroup $B_J$ of $J$. If $\langle J, Y\rangle = G$, then $B_JY$ is epimorphic in $G$.
\end{prop}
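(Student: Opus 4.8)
The plan is to verify the Bien--Borel criterion for epimorphy. By \cite{BB1}, a subgroup $H \leq G$ is epimorphic in $G$ if and only if $V^H = V^G$ for every finite-dimensional rational $G$-module $V$, where $V^X$ denotes the space of $X$-fixed vectors. I would use the sufficiency direction: it suffices to show $V^{B_JY} \subseteq V^G$ for an arbitrary $G$-module $V$, the reverse inclusion $V^G \subseteq V^{B_JY}$ being trivial. Note first that $B_JY$ is genuinely a subgroup: since $B_J$ normalises $Y$, the image of the multiplication morphism $Y \times B_J \to G$ is a constructible subgroup of $G$, hence closed. A vector $v \in V^{B_JY}$ is thus precisely one fixed by both $B_J$ and $Y$.

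The crux is a fixed-point lemma for the type $A_1$ factor: in any rational $J$-module $V$ one has $V^{B_J} = V^J$. Granting this, take $v \in V^{B_JY}$; restricting $V$ to $J$ and applying the lemma to $v \in V^{B_J}$ yields $v \in V^J$. Then $v$ is fixed by $J$ and by $Y$, hence by the group $\langle J, Y\rangle = G$ that they generate, so $v \in V^G$. This gives $V^{B_JY} = V^G$ for every $V$, and the proposition follows from the criterion.

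The main obstacle, and the only place the hypothesis on $J$ is used, is the fixed-point lemma $V^{B_J} = V^J$. I would prove it geometrically, so as to avoid any restriction on $p$. For $v \in V^{B_J}$ the orbit map $J \to V$, $g \mapsto g \cdot v$, is constant on right cosets of $B_J$, since $(gb)\cdot v = g\cdot(b\cdot v) = g\cdot v$ for $b \in B_J$; it therefore factors through a morphism $J/B_J \to V$. As $J$ has type $A_1$, the flag variety $J/B_J$ is isomorphic to $\mathbb{P}^1$, which is complete and irreducible, whereas $V$ is affine; hence this morphism is constant, with value $1 \cdot v = v$. Thus $g \cdot v = v$ for all $g \in J$, that is, $v \in V^J$.

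Two points warrant care. First, one must invoke the fixed-\emph{vector} form of the Bien--Borel criterion (equality $V^H = V^G$), not a fixed-\emph{line} version: a $B_J$-stable line in a $J$-module need not be $J$-stable, whereas a $B_J$-fixed vector is genuinely $J$-fixed by the lemma, and it is this stronger statement that the argument exploits. Second, completeness of $J/B_J$ is exactly what forces the conclusion uniformly in all characteristics; an equivalent justification of the lemma is that such a $v$ generates a $J$-submodule which is a quotient of the Weyl module $W_J(0)$, i.e.\ of the trivial module, and is therefore trivial, so that $v \in V^J$.
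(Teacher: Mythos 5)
Your proof is correct, but it follows a genuinely different route from the paper's. The paper's proof is two lines long: writing $H = B_JY$, it observes that $H \cap J$ contains $B_J$ and is therefore epimorphic in $J$ (a Borel subgroup is always epimorphic in a connected group), that $H \cap Y = Y$ is trivially epimorphic in $Y$, and then cites Result (e) of \cite[Section~2]{BB1}, the generation principle asserting that if $G = \langle X_1, X_2\rangle$ and $H \cap X_i$ is epimorphic in $X_i$ for each $i$, then $H$ is epimorphic in $G$. You instead invoke a different result of Bien and Borel, the fixed-vector characterisation of epimorphy ($V^H = V^G$ for every rational $G$-module $V$), and verify it by hand: the reduction to your lemma $V^{B_J} = V^J$ is straightforward, and the lemma itself follows from completeness of $J/B_J$ against affineness of $V$. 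In effect you have reproved, in the special case needed here, the generation principle that the paper quotes. What your argument buys: it is more self-contained (only the characterisation of epimorphy is imported), it makes visible exactly why a Borel subgroup is the right choice (completeness of the flag variety, uniformly in all characteristics), and it shows that the type $A_1$ hypothesis on $J$ is never used --- the same proof works verbatim for any connected subgroup $J$ and a Borel subgroup thereof. What the paper's citation-based proof buys is brevity, and it scales without change to more than two generating subgroups. Your two cautionary remarks are also well placed: $B_JY$ is indeed closed because a constructible subgroup of an algebraic group is closed, and the criterion must be the fixed-vector one rather than a stable-line one, since a $B_J$-stable line (e.g.\ a highest weight line) need not be $J$-stable.
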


\begin{proof} Let $H = B_JY$, $X_1 = J$ and $X_2 = Y$. Since $X_1\cap H$ contains $B_J$, it is epimorphic in $J$. Clearly $X_2\cap H=Y$ is epimorphic in $Y$ and so Result (e) in \cite[Section~2]{BB1} implies that $H$ is epimorphic in $G$.\end{proof}

We can now prove Theorem \ref{mainthm2}, illustrating our use of Proposition \ref{prop:BBmethod}.

\begin{proof}[Proof of Theorem \ref{mainthm2}]
When $p\geq h$ the regular unipotent elements in $G$ have order $p$ (see \cite[Theorem~0.4]{TestA1}). Therefore, \cite[Theorem~1]{SeitzgoodA1} shows there exists a unique conjugacy class of subgroups of $G$ of type $A_1$ which contain regular unipotent elements; fix a subgroup
$J\leq G$ in this class. By \cite[Theorem~1.2]{TeZ}, $J$ does not lie in a proper parabolic subgroup of $G$ and the reductive overgroups of $J$ are given in Table 1, {\it ibid.} One sees that for $G$ of type $B_l$ ($l\geq 4$), $C_l$, $F_4$, $E_7$ or $E_8$, the subgroup $J$ is a maximal subgroup of $G$. We may assume that a Borel subgroup of $J$ is contained in $B$ and therefore normalises $Y = U_{\alpha_0}$. The subgroup $Y$ does not lie in $J$ as the unique class of nonidentity unipotent elements represented in $J$ is the class of regular elements; hence we have $\langle J, Y\rangle = G$ and Proposition~\ref{prop:BBmethod} gives the result.
 
We treat the remaining cases from \cite[Table~1]{TeZ} one-by-one. We exhibit a $1$-dimensional unipotent group $Y$, normalised by a Borel subgroup of $J$ with the property that $Y$ is not contained in any maximal subgroup $M$ of $G$ which contains $J$. Thus we deduce that $\langle J,Y\rangle = G$ and conclude as in the previous paragraph.

Let $G$ be of type $A_l$ with $l\geq 3$ odd. The unique maximal overgroup of $J$ is a subgroup $M$ of type $C_{(l+1)/2}$ which acts completely reducibly on $L(G)$, with summands $L(M)$ and $V_M(\lambda_2)$ (see \cite[Table~2]{Lubeck}). Up to conjugacy, we may assume that $M$ is the fixed point subgroup of a standard graph automorphism of $G$. Then setting $\gamma = \alpha_1+\cdots+\alpha_{l-1}$ and $\delta = \alpha_2+\cdots+\alpha_l$, and for an appropriate choice of $c\in\{1,-1\}$, the subgroup $Y = \{x_\gamma(t)x_\delta(ct) \mid t\in k\}$ is normalised by a Borel subgroup of $M$ and hence by a Borel subgroup of $J$. (Note that $v\in L(Y)$, $v\ne 0$, affords a maximal vector of the summand $V_M(\lambda_2)$.) Since $Y$ does not lie in $M$, we have that $\langle J,Y\rangle = G$.

For $G$ of type $A_l$ with $l\geq 4$ even, the unique maximal subgroup of $G$ containing $J$ is a subgroup $M$ of type $B_{l/2}$. For an appropriate choice of $M$, we may take $Y$ to be the root subgroup $U_{\alpha_0}$ and we conclude as in the previous case.

Let $G$ be of type $D_l$ with $l \geq 4$. The subgroup $J$ lies in a maximal subgroup $M_1$ of type $B_{l-1}$. By Lemma \ref{lem:isogenies} we may assume that $G = \text{SO}(W)$. Then $M_1$ is the stabiliser of a non-degenerate $1$-dimensional subspace of $W$. Since $J$ acts on $W$ as $V_J(2l-2) + 0$, it follows that $J$ is contained in a unique maximal subgroup of type $B_{l-1}$ which stabilises a $1$-dimensional subspace of $W$. If $l>4$, this is the unique maximal overgroup of $J$ and $M$ acts completely reducibly on $L(G)$ with summands $L(M_1)$ and $V_{M_1}(\lambda_1)$ (see \cite[Table~2]{Lubeck}). By choosing $J$ appropriately up to conjugacy, we may assume that $M_1$ is the fixed point subgroup of a standard involutary graph automorphism of $G$ which interchanges the root groups $U_{\alpha_{l-1}}$ and $U_{\alpha_l}$. Letting $\gamma = \alpha_1+\cdots+\alpha_{l-1}$ and $\delta = \alpha_1+\cdots+\alpha_{l-2}+\alpha_l$, the subgroup $Y = \{x_\gamma(t)x_\delta(-t) \mid t\in k\}$ is normalised by a Borel subgroup of $M$ and hence by a Borel subgroup of $J$. (Note that $0 \neq v\in L(Y)$ affords a maximal vector of the summand $V_{M_1}(\lambda_1)$.) Since $Y$ does not lie in $M_1$, we have that $\langle J,Y\rangle = G$.

When $l=4$, there are three classes of maximal subgroups containing a conjugate of $J$, all of which are of type $B_3$ and are conjugate in the automorphism group of $G$. Since $J$ is contained in a unique maximal subgroup $M_1$ of type $B_3$ acting as $V_{B_3}(100) + 0$ on $W$, it follows that $J$ is contained in exactly three maximal subgroups, say $M_1, M_2, M_3$, one of each class ($M_2$ and $M_3$ act irreducibly on $W$). These three maximal subgroups are each the fixed point subgroup of the three involutary graph automorphisms of $G$ which swap the root groups $U_{\alpha_{3}}$ and $U_{\alpha_4}$, $U_{\alpha_{4}}$ and $U_{\alpha_1}$, and $U_{\alpha_{1}}$ and $U_{\alpha_3}$, respectively. Now it is immediate that $Y$, as defined in the previous paragraph, is not contained in any of the three maximal overgroups of $J$ so again $\langle J,Y\rangle = G$.

Now we treat the cases of bounded rank. Let $G$ be of type $B_3$. Then $J$ is contained in a maximal subgroup $M \leq G$ of type $G_2$. One calculates that $L(G)\downarrow M = L(M) + V_M(10)$ and restricting to $J$ we have $L(G)\downarrow J = L(M) \downarrow J + 6$. By \cite[Table~10.1]{LSexc}, the action of $J$ on $L(M)$ is $10 + 2$ if $p > 7$ or $2|10|2$ if $p=7$. In particular, both $M$ and $J$ fix a unique $7$-dimensional subspace of $L(G)$. Since $M$ is a maximal subgroup of $G$ it follows that $M$ is the unique maximal connected overgroup of $J$. Choosing $J$ such that a Borel subgroup of $J$ is contained in $B$, the highest weight vector of the $V_M(10)$ summand of $L(G)$ is of the form $ce_{111} + de_{012}$, for some $c,d\in k$. Letting $Y = \{x_{111}(ct)x_{012}(dt) \mid t\in k\}$, we have that $Y$ is normalised by a Borel subgroup of $J$ and is not contained in $M_1$, so that $\langle J,Y\rangle = G$. 

Finally, suppose $G$ is of type $E_6$ so that $J$ is contained in a maximal subgroup $M$ of type $F_4$. Using \cite[Table~10.2]{LSexc} we see that $J$ and $M$ fix a unique $1$-dimensional subspace of $V_{G}(\lambda_1)$; thus $M$ is the unique maximal overgroup of $J$. By choosing $J$ appropriately up to conjugacy, we may assume that $M$ is the fixed point subgroup of the standard graph automorphism of $G$. One checks that $Y = \{x_{111221}(t)x_{112211}(-t) \mid t\in k\}$ is normalised by a Borel subgroup of $M$, hence also of $J$ but that $Y$ is not a subgroup of $M$. Thus $\langle J,Y\rangle = G$.
\end{proof}

\section{Proof of Theorem \ref{mainthm} for $G$ of classical type} \label{sec:classical}

In this section $G$ is assumed to be of classical type, $p > 0$ and we recall that the rank of $G$ is assumed to be at least $3$. Furthermore, Lemma~\ref{lem:isogenies} allows us to assume that $G = \textrm{SL}(W), \textrm{Sp}(W)$ or $\textrm{SO}(W)$ for some vector space $W$ of dimension at least $4$. 

\begin{lem}\label{lem:torus} Let $T_r$ be an $r$-dimensional torus with $T_r = \{\chi_1(t_1)\cdots\chi_r(t_r) \mid t_i\in k^*\}$ for some $\chi_1, \dots, \chi_r\in Y(T_r)$. For $a\in\N$ and $t \in k^*$, let 
\[ s_a(t) = \chi_1(t)\chi_2(t^{p^a})\cdots\chi_r(t^{p^{a(r-1)}}),\] 
if $r$ is even let \[s'_a(t) = \chi_1(t^{1+p^a})\chi_2(t^{-1+p^a}) \chi_3(t^{p^{2a}+p^{3a}})\chi_4(t^{-p^{2a}+p^{3a}}) \cdots  \] \[ \chi_{r-1}(t^{p^{a(r-2)} + p^{a(r-1)} }) \chi_r(t^{-p^{a(r-2)} + p^{a(r-1)} }),\] 
and if $r$ is odd let
\[s''_a(t) = \chi_1(t^{1+p^a})\chi_2(t^{-1+p^a}) \chi_3(t^{p^{2a}+p^{3a}})\chi_4(t^{-p^{2a}+p^{3a}}) \cdots  \] \[ \chi_{r-2}(t^{p^{a(r-3)} + p^{a(r-2)} }) \chi_{r-1}(t^{-p^{a(r-3)} + p^{a(r-2)} }) \chi_r(t^{p^{a(r-1)}}).\] 
If a subtorus $S\leq T_r$ contains $\im(s_a)$ for infinitely many $a$, or contains $\im(s'_a)$ for infinitely many $a$, or contains $\im(s''_a)$ for infinitely many $a$, then $S = T_r$.
\end{lem}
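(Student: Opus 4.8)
The plan is to translate the statement into linear algebra on the cocharacter lattice $Y(T_r)$. Since $T_r$ is $r$-dimensional and is generated by the $r$ cocharacters $\chi_1,\dots,\chi_r$, these are linearly independent over $\mathbb{Q}$ and hence form a basis of the rational vector space $U := Y(T_r)\otimes_\Z\mathbb{Q}$. A subtorus $S\leq T_r$ corresponds to a saturated sublattice $Y(S)\leq Y(T_r)$, and $\dim S = \dim_{\mathbb{Q}}(Y(S)\otimes\mathbb{Q})$; in particular $S=T_r$ precisely when $Y(S)\otimes\mathbb{Q}=U$. The key observation is that each of $s_a$, $s'_a$, $s''_a$ is itself a cocharacter of $T_r$: since the $p$-power map on $\mathbb{G}_m$ is multiplication by $p$ on $Y(\mathbb{G}_m)=\Z$, composing $\chi_i$ with $t\mapsto t^{p^n}$ yields $p^n\chi_i$, so for instance $s_a=\sum_{i=1}^r p^{a(i-1)}\chi_i$. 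A nontrivial cocharacter $\mu$ satisfies $\im(\mu)\leq S$ if and only if $\mu\in Y(S)$, and in particular $\mu\in Y(S)\otimes\mathbb{Q}$. Thus it suffices to show that the cocharacters in question, taken over an infinite set of values of $a$, span $U$.

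For the $s_a$: in the basis $\chi_1,\dots,\chi_r$ the cocharacter $s_a$ has coordinate vector $(1,p^a,p^{2a},\dots,p^{a(r-1)})$. Choosing any $r$ values $a_1<\dots<a_r$ among the infinitely many available, the corresponding matrix is Vandermonde with distinct nodes $p^{a_1},\dots,p^{a_r}$ (distinct because $p\geq 2$), hence nonsingular; so these $r$ cocharacters already span $U$ and we are done.

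For $s'_a$ and $s''_a$ I would argue by contradiction: suppose a linear functional $f=(c_1,\dots,c_r)$ vanishes on $s'_a$ (resp.\ $s''_a$) for infinitely many $a$. Writing $q=p^a$ and expanding, one checks that the $k$-th pair $\chi_{2k-1},\chi_{2k}$ contributes coordinates $q^{2k-2}(1+q)$ and $q^{2k-2}(q-1)$, so that $f(s'_a)$ equals the polynomial $\sum_{k=1}^{r/2}\bigl[(c_{2k-1}-c_{2k})q^{2k-2}+(c_{2k-1}+c_{2k})q^{2k-1}\bigr]$ in $q$, of degree at most $r-1$ (with the analogous expression, plus a term $c_r q^{r-1}$, in the odd case). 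Since it vanishes at the infinitely many distinct values $q=p^a$, it is the zero polynomial; reading off the coefficients of $q^{2k-2}$ and $q^{2k-1}$ forces $c_{2k-1}-c_{2k}=c_{2k-1}+c_{2k}=0$, whence $c_{2k-1}=c_{2k}=0$ for every $k$ (and $c_r=0$ in the odd case) since $2$ is invertible in $\mathbb{Q}$. Thus $f=0$, the $s'_a$ (resp.\ $s''_a$) span $U$, and again $Y(S)\otimes\mathbb{Q}=U$, giving $S=T_r$.

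The conceptual work is entirely in the first paragraph: recognising that each $s_a$, $s'_a$, $s''_a$ is a genuine cocharacter and that containment of its image in $S$ is equivalent to membership in the rational span $Y(S)\otimes\mathbb{Q}$. Once this dictionary is in place the three cases reduce to the non-vanishing of an explicit determinant, and the only mild subtlety is verifying that the relevant polynomial really has degree at most $r-1$ with each power of $q$ appearing exactly once, so that the vanishing of all its coefficients can be read off directly; this is precisely where the particular choice of exponents in the definitions of $s'_a$ and $s''_a$ is used. I expect no step to present a serious obstacle, and the positivity of $p$ enters only through $p\geq 2$, which guarantees that the exponents $p^a$ are distinct for distinct $a$.
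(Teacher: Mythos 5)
Your proposal is correct, and its computational core is the same as the paper's: everything comes down to the nonsingularity of the $r\times r$ matrix whose rows are the coordinate vectors of $s_{a_1},\dots,s_{a_r}$ (resp.\ $s'_{a_i}$, $s''_{a_i}$) with respect to $\chi_1,\dots,\chi_r$, i.e.\ to a Vandermonde determinant with nodes $p^{a_i}$. There are two presentational differences. First, the paper works on the character side, invoking $X(S)=X(T_r)/S^\perp$ and showing that any $\gamma\in S^\perp$ is zero, whereas you work on the cocharacter side, using the correspondence between subtori and saturated sublattices of $Y(T_r)$ and showing that the elements $s_a$ span $Y(T_r)\otimes\mathbb{Q}$; these are dual formulations of the same statement, and indeed your treatment of $s'_a$ and $s''_a$ reverts to the annihilator picture ("suppose a functional $f$ kills them all"), which is exactly the paper's setup. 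Second, and more substantively, for $s'_a$ and $s''_a$ the paper massages the coefficient matrix into a (doubled) Vandermonde matrix by explicit column operations, while you observe that $f(s'_a)$ is a polynomial of degree at most $r-1$ in $q=p^a$ in which each power of $q$ appears exactly once, so vanishing at infinitely many values of $q$ forces every coefficient $c_{2k-1}\pm c_{2k}$ (and $c_r$ in the odd case) to be zero. Your polynomial-vanishing device is a little cleaner, handles both parities uniformly, and makes transparent why the exponents in $s'_a$, $s''_a$ were chosen as they were; the paper's column operations buy the slightly sharper fact that only $r$ values of $a$, rather than infinitely many, are actually needed (though of course $r$ values also suffice for a degree-$(r-1)$ polynomial, and the hypothesis supplies infinitely many in any case). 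Both arguments use $p\geq 2$ only to ensure the nodes $p^{a_i}$ are distinct, as you note.
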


\begin{proof}
By \cite[Prop. 3.8]{MaTe}, $X(S) = X(T_r)/S^\perp$, where $S^\perp = \{\gamma\in X(T_r) \mid \gamma(s) = 1$ for all $s\in S\}$. Let $\gamma\in S^\perp$. We show that $\gamma = 0$ so that $S = T_r$. 

Suppose first that $S$ contains $\im(s_a)$ for infinitely many $a$ and choose distinct $a_1, \ldots, a_{r} \in \N$ such that $\im(s_{a_i})\leq S$ for $1\leq i\leq r$. Then $\gamma(s_{a_i}(t)) = 1$ for all $t\in k^*$, i.e. \[\sum\limits_{j=1}^{r} p^{a_i(j-1)} \langle \gamma,\chi_j\rangle  = 0.\] The corresponding matrix $(p^{a_j(i-1)})_{i,j}$ is a Vandermonde matrix, whose determinant is nonzero. So the system of equations only has the trivial solution, $\langle \gamma,\chi_i\rangle=0$ for all $1\leq i \leq r$. Hence $\gamma = 0$.

The case where $S$ contains $\im(s'_a)$ for infinitely many $a$ is similar but with one change. This time, there exist distinct $a_1,\dots,a_{r}\in \N$ such that for each $1 \leq i \leq r$ we have
\[ \sum\limits_{j=1}^{\frac{r}{2}} \left(p^{(2j-2)a_i} + p^{(2j-1)a_i}\right) \langle \gamma,\chi_{2j-1} \rangle + \sum\limits_{j=1}^{\frac{r}{2}} \left(-p^{(2j-2)a_i} + p^{(2j-1)a_i}\right) \langle \gamma,\chi_{2j} \rangle = 0.\]

The corresponding $r \times r$ matrix of coefficients of $\langle \gamma,\chi_{j} \rangle$ is not a Vandermonde matrix this time. Instead, for $j =1, \ldots, r/2$ we implement the column operations replacing the $2j$th column ($c_{2j}$) with the sum of the $2j$th and $(2j-1)$st columns ($c_{2j} + c_{2j-1}$) and then replace column $(c_{2j-1})$ with column $(2c_{2j-1} - c_{2j})$. The resulting matrix is the Vandermonde matrix from above with every entry doubled, and thus has nonzero determinant. The $\im(s''_a)$ case is entirely similar.  
\end{proof}

\begin{prop}\label{prop:LSTplusSeitz} 
Let $M$ be a connected subgroup of $G$ acting irreducibly and tensor indecomposably on $W$. If $M$ preserves a nondegenerate bilinear form $f$ on $W$ or a nondegenerate quadratic form $q$ when $p=2$, let $H$ be the full isometry group $\textrm{Isom}(f)$ or $\textrm{Isom}(q)$. Suppose $M$ is a maximal subgroup of $H$ and $\textrm{rank}(M) > \frac{\textrm{rank}(H)}{2}$ or $H$ is of type $A_{2r}$ and $\textrm{rank}(M) = r$. Then $M$ is of type $G_2$ and $H$ is of type $B_3$ with $p \neq 2$, or $H$ is of type $C_3$ with $p = 2$.
\end{prop}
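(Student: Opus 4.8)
The plan is to reduce at once to the case that $M$ is simple, and then to exploit the fact that the rank hypothesis forces $\dim W$ to be extremely small relative to $\textrm{rank}(M)$; this leaves only a short explicit list of modules, which I would test against Seitz's classification of the maximal subgroups of the classical groups.

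First I would argue that $M$ is simple. A connected group acting irreducibly on $W$ is reductive, and its central torus acts by scalars by Lie--Kolchin, so we may pass to the semisimple part without changing the problem. If $M = M_1 \cdots M_r$ were a commuting product of $r \geq 2$ simple factors, then irreducibility would force $W$ to be the outer tensor product $(V_1, \ldots, V_r)$ of nontrivial irreducibles, contradicting tensor indecomposability; hence $r = 1$ and $M$ is simple. Applying Steinberg's tensor product theorem together with tensor indecomposability once more gives $W = V_M(\lambda)$ for a single nonzero dominant weight $\lambda$, and since a Frobenius twist leaves the image $M \leq GL(W)$ unchanged, we may take $\lambda$ to be $p$-restricted.

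Next I would convert the rank inequality into a dimension bound. Writing $\ell = \textrm{rank}(M)$ and $n = \dim W$, the type of $H$ is read off from $n$: in the form case $\textrm{rank}(H) = \lfloor n/2 \rfloor$, while in the linear case $H = \textrm{SL}(W)$ is of type $A_{n-1}$. Thus $\textrm{rank}(M) > \tfrac{1}{2}\textrm{rank}(H)$ yields $n < 4\ell$ in the form case, and the alternative clause that $H$ is of type $A_{2r}$ with $\textrm{rank}(M) = r$ yields $n = 2\ell + 1$. I would then feed this into the known lower bounds for dimensions of irreducible modules together with L\"ubeck's tables of small-dimensional representations: the requirement $\dim V_M(\lambda) < 4\,\textrm{rank}(M)$ is so stringent that, apart from the natural modules of the classical types, only a handful of spin and small exceptional modules can occur.

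The crux is the final elimination. The natural modules are discarded at once, since they give $M = H$ and hence no proper subgroup; the odd-dimensional $A_{2r}$ branch is ruled out by observing that no simple group of rank $r$ carries a non-self-dual irreducible module of dimension $2r+1$ (so that $H$ could be $\textrm{SL}(W)$), as one checks directly against the list of small modules. For each remaining candidate I would compute the precise isometry type of $H$ -- distinguishing the orthogonal and symplectic cases and their dependence on $p$ -- and then consult Seitz's tables to decide maximality; the $D_4$ configurations collapse under triality (the half-spin module gives $M = H$, and the spin embedding of $B_3$ is triality-equivalent to the reducible geometric subgroup $\textrm{SO}_7 < \textrm{SO}_8$), so they contribute nothing new. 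The only configurations surviving as proper maximal subgroups meeting the rank inequality are $G_2 < \textrm{Isom}(q) = B_3$ on the $7$-dimensional module with $p \neq 2$, and $G_2 < \textrm{Isom}(f) = C_3$ on the $6$-dimensional module with $p = 2$, which is the assertion. I expect the main obstacle to be exactly this case-by-case check: the dimension bound alone does not single out $G_2$, so the real work lies in pinning down the isometry type of $H$ for each small module and in invoking the classification of maximal subgroups to dispose of the borderline spin possibilities.
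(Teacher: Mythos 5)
Your route is genuinely different from the paper's, and most of it is sound. The paper disposes of the branch $\textrm{rank}(M) > \frac{1}{2}\textrm{rank}(H)$ in one stroke by quoting the classification of simple subgroups of rank exceeding half the rank of the ambient group \cite[Theorem~1]{LSTlargerank}: either $M$ is one of the listed exceptions, or $M$ lies in a maximal rank subsystem subgroup, and the latter is impossible since such a subgroup would inherit irreducibility and tensor indecomposability on $W$, contrary to \cite[Theorem~4.1]{Seitzclass}; L\"ubeck's tables enter only for the $A_{2r}$ clause. You instead convert the rank hypothesis into dimension bounds ($\dim W < 4\,\textrm{rank}(M)$ in the self-dual case, $\dim W \leq 2\,\textrm{rank}(M)+1$ in the linear case) and run L\"ubeck's tables throughout. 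That reduction, your argument that $M$ is simple, and your observation that no simple group of rank $r$ has a non-self-dual irreducible module of dimension $2r+1$ (which kills the $A_{2r}$ clause, exactly as in the paper) are all correct.

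The gap is at the step you yourself call the crux: the elimination of the spin embedding $B_3 < D_4$. Dismissing it because it is ``triality-equivalent to the reducible geometric subgroup $\mathrm{SO}_7 < \mathrm{SO}_8$'' is not a valid argument. Triality is an \emph{outer} automorphism: it preserves neither conjugacy in $\mathrm{SO}(W)$ nor the hypotheses of the proposition, which refer to the action on the fixed space $W$ (irreducibility on $W$ is not an automorphism-invariant property). What automorphisms \emph{do} preserve is maximality, so your own observation shows that the spin $B_3$ is a maximal connected subgroup of $\mathrm{SO}_8$; moreover it acts irreducibly and tensor indecomposably on $W$, it preserves a nondegenerate symmetric bilinear form, and it has rank $3 > \frac{1}{2}\textrm{rank}(D_4)$. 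It therefore survives every test in your sieve while not being of type $G_2$, so as written your argument terminates in contradiction with the statement you set out to prove. Excluding this configuration requires a genuine further argument -- for instance engaging with the precise reading of ``full isometry group'' and ``maximal'' in the statement (in even dimension $\mathrm{Isom}(q) = \mathrm{O}(W)$ is disconnected and $\mathrm{SO}(W)$ sits strictly between $M$ and $H$; but one must then reconcile this reading with the $G_2 < B_3$ case of the conclusion, where $\mathrm{O}_7$ is likewise disconnected), or the stronger rank hypothesis $\textrm{rank}(M) \geq \textrm{rank}(H)$ that is actually available in the paper's application of this proposition. This is precisely the point at which the paper leans on the exception list of \cite[Theorem~1]{LSTlargerank} rather than on a tables argument; triality alone cannot carry that weight.
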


\begin{proof}
Since $M$ acts irreducibly and tensor indecomposably on $W$ (of dimension at least $3$) it follows that $M$ is simple. 

First suppose $\textrm{rank}(M) > \frac{\textrm{rank}(H)}{2}$. By \cite[Theorem~1]{LSTlargerank}, either $M$ is as in the conclusion of the proposition or there exists a maximal rank subsystem subgroup $K \leq G$ containing $M$ (note that the case $A_2 < B_3$ in \textit{loc. cit.} does not correspond to a maximal embedding). In the second case, since $K$ contains $M$, it also acts irreducibly and tensor indecomposably on $W$. This is ruled out by \cite[Theorem~4.1]{Seitzclass}.    

Now $G$ is of type $A_{2r}$ and $M$ has rank $r$ and an irreducible module $V$ of dimension $2r+1$. If $r > 11$, \cite[Theorem~5.1 and Table~2]{Lubeck} show that $M$ is of type $B_r$, $p \neq 2$ and $V$ is the natural module of dimension $2r+1$. However, $M = \textrm{Isom}(f)$ for a nondegenerate form $f$ on $V$ and so $M$ is not a maximal subgroup of $H$. If $r \leq 11$ we use Theorem~4.1 and the tables in Appendix~A of \textit{ibid}. These allow us to conclude that there are no further possibilities for $(M,V)$.    
\end{proof}

\begin{prop}\label{prop:classicaltool}
For each $a \in \N$, let $J_a \leq G$ be a subgroup of type $A_1$ such that $W\downarrow J_a = W_1 \oplus \cdots \oplus W_r$ as in one of the following cases. 
\begin{enumerate}
\item $G$ is of type $A_{2l}$, $p \neq 2$, $l=r$ and $W_i$ is irreducible of highest weight $p^{a(i-1)}$ for $1 \leq i \leq r-1$  and $W_r$ is irreducible of highest weight $2p^{a(r-1)}$,
\item $G$ is of type $B_{l}$, $p \neq 2$, $l=2r-1$ and $W_i$ is irreducible of highest weight $p^{a(2i-2)} +p^{a(2i-1)}$ for $1 \leq i \leq r-1$ and $W_r$ is irreducible of highest weight $2p^{a(2r-2)}$,
\item $G$ is of type $C_l$, $l=r$ and $W_i$ is irreducible of highest weight $p^{a(i-1)}$ for $1 \leq i \leq r$, or
\item $G$ is of type $D_{l}$, $l=2r$ and  $W_i$ is irreducible of highest weight $p^{a(2i-2)} +p^{a(2i-1)}$ for $1 \leq i \leq r$. 
\end{enumerate}
Furthermore, suppose that for each $a \in \N$ there exists a connected subgroup $X_a$ acting irreducibly on $W$ such that $J_a \leq X_a$. Then there exists $b \in \N$ such that $X_b = G$. 
\end{prop}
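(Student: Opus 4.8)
The plan is to argue by contradiction: if no $X_b$ equals $G$, then every $X_a$ is a proper connected subgroup acting irreducibly on $W$, and I aim to extract from the infinitely many $J_a$ a single proper subtorus of $T$ that contains $\im(s_a)$ (resp.\ $\im(s'_a)$, $\im(s''_a)$) for infinitely many $a$, contradicting Lemma~\ref{lem:torus}. First I would replace each $X_a$ by a maximal connected overgroup $M_a$ with $X_a \le M_a < G$; since $X_a$ is irreducible on $W$, so is $M_a$, and in particular $M_a$ is reductive, indeed semisimple. Moreover $\operatorname{rank}(M_a) < l$, since a proper connected reductive subgroup of maximal rank in a classical group is a subsystem subgroup and hence reducible on the natural module $W$.

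Next I would set up the torus bookkeeping. With $T$ the fixed maximal torus of $G$ and $\chi_1,\dots,\chi_l$ the cocharacters realising the weights of $T$ on $W$, the prescribed decomposition $W\downarrow J_a$ identifies the maximal torus $T_{J_a}$ of $J_a$ with the image of the relevant cocharacter from Lemma~\ref{lem:torus}: with $\im(s_a)$ in case (iii), with $\im(s'_a)$ in case (iv), and with $\im(s''_a)$ in cases (i) and (ii). For all large $a$ the exponents occurring in these cocharacters are separated in base $p$, so $\langle\beta,s_a\rangle\neq 0$ for every root $\beta$ of $G$; hence $\im(s_a)$ is a regular subtorus with $C_G(\im(s_a)) = T$. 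Consequently $T$ is the unique maximal torus of $G$ containing $\im(s_a)$, and $S_a := C_{M_a}(\im(s_a)) = M_a\cap T$ is a maximal torus of $M_a$: it is abelian, being contained in $T$, and contains a maximal torus of $M_a$. Thus $\dim S_a = \operatorname{rank}(M_a) < l$, so each $S_a$ is a \emph{proper} subtorus of $T$ containing $\im(s_a)$ (resp.\ $\im(s'_a)$, $\im(s''_a)$).

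The crux is to see that only finitely many subtori $S_a$ can occur. There are finitely many $G$-conjugacy classes of maximal connected subgroups, so the weight configuration of $M_a$ on $W$ ranges over a fixed finite list; Proposition~\ref{prop:LSTplusSeitz} is what pins down the tensor-indecomposable possibilities of rank exceeding $l/2$ (and the exceptional $G_2<B_3,C_3$) in this list. Because $\im(s_a)\le S_a$ has all weight spaces on $W$ one-dimensional, the same holds for $S_a$, so the restriction map $X(T)\to X(S_a)$ matches the weights of $T$ on $W$ bijectively with the fixed weight configuration of $M_a$. This matching is a bijection of finite sets, and it determines $S_a^{\perp}=\ker\bigl(X(T)\to X(S_a)\bigr)$, hence $S_a$ itself. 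As there are only finitely many such matchings, the set $\{S_a\}$ is finite. By pigeonhole some proper subtorus $S\le T$ satisfies $S_a = S$ for infinitely many $a$, whence $S$ contains $\im(s_a)$ (resp.\ $\im(s'_a)$, $\im(s''_a)$) for infinitely many $a$; Lemma~\ref{lem:torus} then forces $S = T$, contradicting $\dim S < l$. Therefore some $X_b = G$.

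The main obstacle is the finiteness statement for the tori $S_a$: one must verify that the way a maximal torus of the overgroup embeds into $T$ is determined, up to finitely many choices, by the fixed weight pattern of $M_a$ on $W$. This is precisely where the classification of irreducible maximal subgroups enters, through Proposition~\ref{prop:LSTplusSeitz} (to control the high-rank tensor-indecomposable subgroups) and the reducibility of maximal-rank subgroups (to exclude $\dim S_a = l$ and thereby guarantee $S_a$ is proper). A secondary technical point is the uniform regularity of $\im(s_a)$, $\im(s'_a)$, $\im(s''_a)$ for large $a$, which rests on the same base-$p$ separation of exponents (and the doubling device) already used in the proof of Lemma~\ref{lem:torus}.
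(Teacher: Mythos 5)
Your architecture is genuinely different from the paper's: the paper pigeonholes the maximal overgroups into a \emph{single} $M$ containing conjugates of infinitely many $J_a$, applies Lemma~\ref{lem:torus} inside a maximal torus of $M$ to force $\mathrm{rank}(M)\geq l$, and then rules out such an $M$ via Proposition~\ref{prop:LSTplusSeitz}; you instead try to bound $\mathrm{rank}(M_a)\leq l-1$ for \emph{every} $a$, realise the maximal tori $S_a=C_{M_a}(\im(s_a))$ inside the fixed $T$ via regularity of $\im(s_a)$, prove $\{S_a\}$ is finite, and pigeonhole. The centraliser step and the finiteness-of-tori step are sound (though for the latter you must transport the weights of $S_a$ on $W$ into the character lattice of a maximal torus of a fixed representative of each conjugacy class before you may speak of ``finitely many matchings''). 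However, your opening step contains a genuine gap: the assertion that a proper connected reductive subgroup of maximal rank in a classical group is a subsystem subgroup and hence reducible on $W$ is false in characteristic $2$, which case (iii) allows --- and this is exactly the case the paper needs in order to treat $B_l$ and $C_l$ with $p=2$. In characteristic $2$ the stabiliser $\mathrm{SO}(W)=D_l$ of a nondegenerate quadratic form lies inside $\mathrm{Sp}(W)=C_l$ (the polar form of $q$ is alternating), has rank $l$, is generated by the short root subgroups of $G$ (so it is a subsystem subgroup in the paper's sense, just not one arising from a closed subsystem), acts irreducibly on $W$, and is in fact a maximal connected subgroup of $G$. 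If some $M_a$ were this $D_l$, then $S_a=T$ would not be a proper subtorus and your pigeonhole/Lemma~\ref{lem:torus} contradiction evaporates.

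The gap is repairable, but only by an argument you never make: no conjugate of $J_a$ can lie in $\mathrm{SO}(W)$, because $J_a$ fixes no nonzero quadratic form on $W$. Indeed, each $2$-dimensional summand $W_i$ admits no nonzero $\mathrm{SL}_2$-invariant quadratic form, and since the $W_i$ are pairwise non-isomorphic self-dual modules there are no invariant cross terms either; hence the polar form of any invariant $q$ vanishes, and then $q$ itself vanishes. This is precisely the role of the paper's remark that in cases (i) and (iii) the subgroups $J_a$ preserve no nondegenerate bilinear, respectively quadratic, form, which the paper feeds into Proposition~\ref{prop:LSTplusSeitz}; your proof needs the same input before the bound $\mathrm{rank}(M_a)<l$ is legitimate. (Two smaller write-up points: your final pigeonhole also needs the cocharacters $\chi_i$ defining $s_a$ to be the \emph{same} for the infinitely many $a$ involved --- the paper's ``finitely many permutations of the basis'' reduction --- and case (i) matches the cocharacter $s_a$ of Lemma~\ref{lem:torus}, with a doubled final entry, rather than $s''_a$.)
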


\begin{proof}
Assume, for a contradiction, that $X_a$ is a proper subgroup of $G$ for all $a \in \N$. For $a \in \N$ let $M_a$ be a maximal connected overgroup of $X_a$. Any maximal connected overgroup of $X_a$ is reductive since $X_a$ acts irreducibly on $W$. By \cite[Theorem~3]{Seitzclass} and \cite[Theorem~1]{LiSe04}, there are only finitely many conjugacy classes of maximal connected reductive subgroups of $G$. Therefore, there exists an infinite set $I \subset \N$ such that all $M_i$ and $M_j$ are conjugate for $i,j \in I$. Therefore, there exists $j \in I$ such that $M = M_j$ contains $X_i^{g_i}$ for some $g_i \in G$ and all $i \in I$.

We claim that $M$ has rank at least $l$. With this claim we can quickly deduce a contradiction as follows. Since $X_j$ is contained in $M$ it follows that $M$ acts irreducibly and tensor indecomposably on $W$. In cases (i) and (iii) the subgroups $J_a$ do not preserve a nondegenerate bilinear, respectively quadratic form on $W$, and so in all cases $M$ is not contained in any proper isometry subgroup of $G$. Now apply Proposition~\ref{prop:LSTplusSeitz}. The two exceptions listed cannot occur in our situation since $G_2$ does not have rank $3$ and so no such $M$ can exist. 

It remains to prove the claim. Let $T_M \leq M$ be a maximal torus of $M$. Conjugating by elements of $M$, we may assume that a maximal torus of each $J_i^{g_i}$ is contained in $T_M$ for all $i \in I$. Fix an ordered basis of $W$ so that $T_M$ lies in the set of diagonal matrices with respect to this basis. 

Suppose we are in case (i). Let $S_i \leq J_i^{g_i}$ be a maximal torus of $J_i^{g_i}$ contained in $T_M$ for all $i \in I$. By our choices above, we know that $S_i$ is a subgroup of the diagonal torus $T_D$ of ${\rm SL}(W)$. With only finitely many distinct permutations of the fixed ordered basis of $W$, we may assume that for some infinite subset $A \subseteq I$ we have $$S_a = \{{\rm diag}(c,c^{-1},c^{p^a}, c^{-p^a}, \dots, c^{p^{(r-2)a}},c^{-p^{(r-2)a}}, c^{2p^{a(r-1)}},1, c^{-2p^{a(r-1)}}) \mid c\in k^*\}$$ for each $a \in A$. The torus $T_D$ has cocharacters $\chi_i: k^* \rightarrow T_D$ where $\chi_i(c)$ for $1 \leq i \leq r-1$ is the diagonal matrix with $c$ in the $(2i-1)$st position, $c^{-1}$ in the $2i$th position and $1$s otherwise, and $\chi_r(c)$ is the diagonal matrix with $c^2$ in the $(r-2)$nd position, $c^{-2}$ in the $r$th position and $1$s otherwise. Applying  Lemma~\ref{lem:torus}, we see that $S = \langle S_a \mid a \in A \rangle$ is a torus of dimension $r=l$ and hence $T_M$ has dimension at least $l$. Case (iii) follows in exactly the same way.  

Now suppose we are in case (ii). Again after reordering the basis of $W$, there exists an infinite subset $A \subseteq I$ such that 
\[ S'_a = \{{\rm diag}(c^{1+p^a},c^{-1-p^a},c^{-1+p^a}, c^{1-p^a},\ldots, c^{p^{(r-4)a}+p^{(r-3)a}}, c^{-p^{(r-4)a}-p^{(r-3)a}},\] \[c^{-p^{(r-4)a}+p^{(r-3)a}}, c^{p^{(r-4)a}-p^{(r-3)a}}, c^{2p^{a(2r-2)}},1, c^{-2p^{a(2r-2)}}) \mid c\in k^*\},\] for each $a \in A$. 
Lemma~\ref{lem:torus} shows that $S = \langle S'_a \mid a \in A \rangle$ is a torus of dimension $l$. Case (iv) follows in exactly the same way.
\end{proof}

In the next sections we deal with all classical types case-by-case. We note that the case when $G$ is of type $B_l$, $p=2$ follows from the $C_l$, $p=2$ case using Lemma \ref{lem:isogenies}. 

\subsection{$G$ of type $C_l$}\label{sec:C}

Let $K \leq G$ be a subsystem subgroup of type $A_1^l$ arising from the natural subgroup ${\rm Sp}_2 \times \cdots \times {\rm Sp}_2\leq {\rm Sp}_{2l}$ (recall that our subsystem subgroups are always chosen such that they contain $T$). Fix $a \in \N$. Let $J_a$ be a subgroup of type $A_1$ diagonally embedded in $K$ with twists $q_1 = 1, q_2=p^a,\dots,q_l = p^{a(l-1)}$ and chosen such that a Borel subgroup $B_a \leq  J_a$ (resp. a maximal torus $T_a \leq  B_a \leq J_a$) is contained in $B$ (resp. $T$). Then $W\downarrow J_a = \oplus_{i=1}^l W_i$, where $W_i$ is irreducible of highest weight $q_i$, for $1\leq i\leq l$. Note that 
\[ L(G)\downarrow J_a = L(K) \downarrow J_a \oplus(\oplus_{1\leq i<j\leq l} (q_i+q_j)).\] Furthermore, the $T_a$-weight spaces of $L(G)$ corresponding to nonzero weights are $1$-dimensional and afforded by root vectors with respect to $T$. Let $\gamma_i\in\Phi$ be such that $\gamma_i\downarrow T_a = q_{i}+q_{i+1}$, for $1\leq i\leq l-1$. Since $q_i+q_{i+1}+q_j+q_{j+1}$ is not a $T_a$-weight of $L(G)$, we have that $\gamma_i+\gamma_j\not\in\Phi$ and so the corresponding $T$-root subgroups commute. Therefore $Y_a = \{ x_{\gamma_1}(t^{q_1}) \cdots x_{\gamma_{l-1}}(t^{q_{l-1}}) \mid  t\in k\}$ is a 1-dimensional unipotent group normalised by $T_a$, acting with weight $1+p^a$. Similar weight based arguments show that $Y_a$ is normalised by $B_a$.

Let $X_a=\langle J_a,Y_a\rangle$. We claim that $X_a$ acts irreducibly on $W$. Since the $W_i$ are pairwise nonisomorphic $J_a$-modules, any $X_a$-submodule of $W$ is a sum of some of the $W_i$. Note that the $\gamma_i$ are short roots, so that the root elements $x_{\gamma_i}(t)$ $(t \neq 0)$ act on $W$ with Jordan blocks $J_2^2+J_1^{2l-4}$. The $T_a$-weights of $W_j$ are $q_j, -q_j$ and so it follows that $W_j$ is fixed by $x_{\gamma_i}(t)$ for all $j\not\in\{i,i-1\}$. Furthermore, the projection of $Y_a(W_1)$ to $W_2$ is nonzero, the projection of $Y_a(W_l)$ to $W_{l-1}$ is nonzero and the projections of $Y_a(W_i)$ for $1 < i < l$ to $W_{i-1}$ and $W_{i+1}$ are both nonzero. Hence $X_a$ stabilises no proper subspace of $W$, establishing the claim.

We are now in a position to apply Proposition \ref{prop:classicaltool}. This yields $b \in \N$ such that $X_b = G$. Therefore, $B_b Y_b$ is a $3$-dimensional epimorphic subgroup of $G$ by Proposition~\ref{prop:BBmethod}. 

\subsection{$G$ of type $D_l$ with $l$ even}\label{sec:Deven}

Let $K \leq G$ be a subsystem subgroup of type $A_1^l$ arising from the natural subgroup ${\rm SO}_4 \times \cdots \times {\rm SO}_4\leq {\rm SO}_{2l}$. Fix $a \in \N$. Let $J_a$ be a subgroup of type $A_1$ diagonally embedded in $K$ with twists $q_1 = 1, q_2=p^a,\dots,q_l = p^{a(l-1)}$ and chosen such that a Borel subgroup $B_a \leq J_a$ (resp. a maximal torus $T_a \leq B_a \leq J_a$) is contained in $B$ (resp. $T$). Then $W \downarrow J_a = \oplus_{i=1}^{l/2} W_i$, where $W_i$ is irreducible of highest weight $q_{2i-1}+q_{2i}$ for $1\leq i \leq l/2$. Note that \[L(G)\downarrow J_a = L(K)\downarrow J_a \oplus \left(\bigoplus_{1\leq i < j \leq l/2} (q_{2i-1}+q_{2i}+q_{2j-1}+q_{2j})\right).\] Furthermore, the $T_a$-weight spaces of $L(G)$ corresponding to nonzero weights are $1$-dimensional and afforded by root vectors with respect to $T$. Let $\gamma_{i,j}\in\Phi$ be such that $\gamma_{i,j}\downarrow T_a = q_{2i-1}+q_{2i}+q_{2j-1}+q_{2j}$, for $1\leq i < j \leq l/2$. Note that $\gamma_{i,j}+\gamma_{m,n}\not\in\Phi$ for $1\leq m < n \leq l/2$ since $q_{2i-1}+q_{2i}+q_{2j-1}+q_{2j} + q_{2m-1}+q_{2m}+q_{2n-1}+q_{2n}$ is not a $T_a$-weight of $L(G)$. Thus $U_{\gamma_{i,j}}$ and $U_{\gamma_{m,n}}$ commute. 

Then $Y_a=\{x_{\gamma_{1,3}}(t^{q_1}) x_{\gamma_{3,5}}(t^{q_3})\cdots x_{\gamma_{l-3,l-1}}(t^{q_{l-3}}) \mid t\in k \}$ is a $1$-dimensional unipotent group normalised by a Borel subgroup of $J_a$. Let $X_a=\langle J_a, Y_a\rangle$; we claim that $X_a$ acts irreducibly on $W$. The nontrivial root elements $x_{\gamma_{i,j}}(t)$ fix all weight vectors in $W$ of weight different from $-q_{2i-1} - q_{2i}$ and $-q_{2j-1} - q_{2j}$. Since root elements of $G$ act on $W$ with Jordan blocks $J_2^2 + J_1^{2l-4}$, it follows that the endomorphisms $x_{\gamma_{i,j}}(t) - \text{id}_W$ $(t \neq 0)$ act injectively on the $2$-dimensional space spanned by weight vectors of weights $-q_{2i-1} - q_{2i}$ and $-q_{2j-1} - q_{2j}$. Combining these observations we see that the projection of $Y_a(W_i)$ to $W_{i-1}$ is nonzero for $1 < i \leq l/2$, and the projection of $Y_a(W_1)$ to $W_2$ is nonzero. Since the $W_i$ are pairwise nonisomorphic irreducible $J_a$ modules, we deduce the claim. 

Applying Proposition \ref{prop:classicaltool}, we see that for some $b \in \N$ we have $X_b = G$ and $B_bY_b$ is an epimorphic subgroup of $G$ of dimension three.   

\subsection{$G$ of type $A_l$ with $l \geq 3$ odd}\label{sec:Aodd}

Let $l=2m-1$ and $K \leq G$ be a Levi factor of type $A_1^m$ corresponding to the roots $\{\alpha_1, \alpha_3, \ldots, \alpha_{2m-1}\}$. The subgroup $K$ is contained in $M$, a maximal connected subgroup of $G$ of type $C_{m}$. We may take a set of simple roots $\beta_1, \ldots, \beta_m$ of $M$ such that the corresponding root groups $U_{\beta_i} = \{u_i(t) \mid t \in k \}$ are given in terms of root elements of $G$ as $u_i(t) = x_{ \alpha_{l-i} + \alpha_{l-i+1} }(t) x_{-\alpha_{l-i-1}-\alpha_{l-i}}(t)$ for $1 \leq i \leq m-1$ and $u_m(t) = x_{\alpha_1}(t)$.  

For each $a \in \N$ define $J_a$ to be a subgroup of type $A_1$ diagonally embedded in $K$ with twists $q_1 = 1, q_2 = p^a, \dots,q_{m} = p^{a(m-1)}$; this is conjugate to the subgroup $J_a$ defined in Section \ref{sec:C}. The subgroup $Y_a \leq M$ from the same argument becomes $Y_a = \{ y_a(t)z_a(t) \mid t \in k \}$ where 
\begin{align*}
y_a(t) & = x_{-\alpha_2}(t) x_{-\alpha_4}(t^{p^a}) \cdots x_{-\alpha_{l-1}}(t^{p^a(m-1)}), \\
z_a(t) & = x_{\alpha_1+\alpha_2+\alpha_3}(t) x_{\alpha_3+\alpha_4+\alpha_5}(t^{p^a}) \cdots x_{\alpha_{l-3}+\alpha_{l-2}+\alpha_{l-1}}(t^{p^{a(m-1)}}).
\end{align*}
Furthermore, it follows from Section \ref{sec:C} that there exists $b \in \N$ such that $M = \langle J_b, Y_b \rangle$. Let $Z_b =  \{ z_b(t) \mid t \in k \}$, a $1$-dimensional unipotent subgroup which commutes with $Y_b$. One checks that a Borel subgroup $B_b$ of $J_b$ normalises $Z_b$. Since the $q_i+q_{i+1}$ weight spaces of $L(M) \downarrow J$ are $1$-dimensional it follows that $Z_b$ is not contained in $M$. We conclude as usual, obtaining a $4$-dimensional epimorphic subgroup $B_bY_bZ_b$ of $G$. 

\subsection{$G$ of type $A_l$ with $l$ even and $p\neq 2$} \label{sec:Alevenpodd}

Let $m = (l-2)/2$ and $H \leq G$ be a Levi factor of type $A_1^{m} A_2$ corresponding to the roots $\{\alpha_1, \alpha_3, \ldots, \alpha_{l-3}, \alpha_{l-1}, \alpha_{l} \}$. Let $K \leq H$ be a subgroup of type $A_1^{m + 1}$, where the subgroup of type $A_1$ in the $A_2$-factor acts irreducibly on the natural module for $A_2$. Let $J_a\leq K$ be a subgroup of type $A_1$ diagonally embedded in $K$ with twists $q_1 = 1, q_2=p^a,\dots,q_{m+1} = p^{am}$, and chosen such that a Borel subgroup $B_a \leq J_a$ (resp. a maximal torus $T_a \leq B_a \leq J_a$) is contained in $B$ (resp. $T$). In particular, $W \downarrow J_a = W_1 \oplus \cdots \oplus W_m \oplus W_{m+1}$ where $W_i$ is irreducible of highest weight $q_i$ for $1\leq i\leq m$ and $W_{m+1}$ is irreducible of highest weight $2q_{m+1}$. Letting $W_0 = \oplus_{i=1}^{m} W_i$, we note that the projection of $J_a$ to $\textrm{Sp}(W_0)$ is conjugate to the subgroup called $J_a$ in Section \ref{sec:C}. 

A direct calculation shows that $U_{-\alpha_2-\cdots-\alpha_{l-2}}$ fixes the subspace $W_{m+1}$ and the image of $W_0$ under the action of $U_{-\alpha_2-\cdots-\alpha_{l-2}}$ has nonzero projection in $W_{m+1}$. The root group $U_{\alpha_0}$ does the opposite; to be precise, $U_{\alpha_0}$ fixes the subspace $W_0$ and the image of $W_{m+1}$ under the action of $U_{\alpha_0}$ has nonzero projection in $W_0$. Let  $Z_a = \{ x_{-\alpha_2-\cdots-\alpha_{l-2}}(t) x_{\alpha_0}(t) \mid t \in k \}$, a $1$-dimensional unipotent group on which $T_{a}$ acts via weight $q_1+2q_{m+1}$. A direct calculation shows that $Z_a$ is normalised by the positive root groups of $K$. 

Now let \[Y_a = \{ x_{-\alpha_2}(t) x_{\alpha_1+\alpha_2+\alpha_3}(t) x_{-\alpha_4}(t^{p^a}) x_{\alpha_3+\alpha_4+\alpha_5}(t^{p^a}) \cdots \]
\[x_{-\alpha_{l-3}}(t^{p^a(m-1)}) x_{\alpha_{l-3}+\alpha_{l-2}+\alpha_{l-1}}(t^{p^{a(m-1)}}) \mid t \in k \}.\] 
One checks that $Y_a$ is a $1$-dimensional unipotent group normalised by the positive root groups of $K$, and has $T_a$-weight $1+p^a$. Therefore $Y_a$ is normalised by $B_a$. Furthermore, $Y_a$ commutes with $Z_a$. This subgroup $Y_a$ is also contained in $\textrm{Sp}(W_0)$ and is conjugate to the subgroup called $Y_a$ in Section \ref{sec:C}. Combining this with the remarks in the previous paragraph we conclude that $X_a = \langle J_a,Y_a,Z_a\rangle$ acts irreducibly on $W$. Now Proposition \ref{prop:classicaltool} applies, yielding a $4$-dimensional epimorphic subgroup of $G$. 

\subsection{$G$ of type $A_l$ with $l$ even and $p=2$} \label{sec:Alevenp2}

Let $M \leq G$ be a maximal subgroup of type $B_{l/2}$, the centraliser of a standard graph automorphism of $G$. We rely on the construction in Section~\ref{sec:C} and an isogeny $\pi:C_{l/2} \to M$. Let $\pi(J_b)\leq M$ be a subgroup of type $A_1$ and $\pi(Y_b)$ a 1-dimensional subgroup such that $M = \langle \pi(J_b), \pi(Y_b)\rangle$. We may assume that a Borel subgroup of $\pi(J_b)$ is contained in $B$. Note that the $T$-root group $U_{\alpha_0}$ is normalised by the Borel subgroup $B$ of $G$ and does not lie in the subgroup $M$. We deduce that $\langle \pi(J_b),\pi(Y_b), U_{\alpha_0}\rangle = G$ and therefore $\pi(B_b)\pi(Y_b)U_{\alpha_0}$ is a 4-dimensional epimorphic subgroup in $G$.

\subsection{$G$ of type $B_l$ with $l$ even and $p \neq 2$} \label{sec:Blevenpodd}

Let $M\leq G$ be a maximal subgroup of type $D_l$ containing our fixed choice of maximal torus $T$ of $G$. For each $a\in\N$, let $J_a \leq M$ be the subgroup of type $A_1$ and $Y_a$ the associated $1$-dimensional unipotent subgroup of $M$ as defined in Section~\ref{sec:Deven}. Furthermore, let $b\in\N$ be such that $M = \langle J_b,Y_b\rangle$. 

The action of $M$ on $L(G)$ is $$L(G)\downarrow M = L(M) \oplus \lambda_1,$$ and there exists $\gamma \in \Phi$ such that $e_\gamma$ is a highest weight vector for the summand $V_M(\lambda_1)$ (so $e_\gamma\not\in L(M)$). Restricting from $M$ to $J_b$ we see that
$$L(G)\downarrow J_b = L(M)\downarrow J_b \oplus \left(\bigoplus_{i=1}^{l-1} (q_i + q_{i+1})\right)$$ 
and $e_\gamma$ is a $T_b$-weight vector of weight $q_i+q_{i+1}$ for some $1 \leq i \leq l-1$. Defining $X_b = \langle J_b,Y_b,U_\gamma\rangle$, we see that $X_b$ properly contains the maximal connected subgroup $M$, and hence $X_b = G$. As usual, $B_bY_bU_\gamma$ is a 4-dimensional epimorphic subgroup of $G$.

\subsection{$G$ of type $B_l$ with $l$ odd and $p \neq 2$} \label{sec:Bloddpodd}

Let $K \leq G$ be a subsystem subgroup of type $A_1^l$ corresponding to the natural subgroup $\text{SO}_4^{(l-1)/2} \text{SO}_3 \leq G$. Let $J_a$ be a subgroup of type $A_1$ diagonally embedded in $K$ with twists $q_1 = 1, q_2 = p^a, q_3 = p^{2a} \ldots, q_l = p^{(l-1)a}$ and chosen such that a Borel subgroup $B_a \leq J_a$ (resp. a maximal torus $T_a \leq B_a \leq J_a$) is contained in $B$ (resp. $T$). Then  $W\downarrow J_a = W_1 \oplus \cdots \oplus W_{(l-1)/2} \oplus W_{(l+1)/2}$, where $W_i$ is irreducible of highest weight $q_{2i-1} + q_{2i}$ for $1 \leq i < (l+1)/2$ and $W_{(l+1)/2}$ is irreducible of highest weight $2q_l$. 

The action of $J_a$ on $L(G)$ has direct summands with highest weight vectors of weight $q_i+q_{i+1}+q_{j}+q_{j+1}$ for $i$ and $j$ odd,
distinct and less than or equal to $(l-1)/2$, and $q_i+q_{i+1}+2q_{l}$ for $i$ odd and less than or equal to $(l-1)/2$. All nonzero weights are afforded by $T$-root vectors in $L(G)$, and one can check that the associated roots are long roots which we call $\gamma_{i,j}\in\Phi$ (including $j=l$ for the second set of summands).  

For all $i,j,m,n$ odd, a consideration of $T_a$-weights shows that the root groups $U_{\gamma_{i,j}}$ and  $U_{\gamma_{m,n}}$ commute. Let $$Y_a = \{x_{\gamma_{1,3}}(t) x_{\gamma_{3,5}}(t^{p^3 a}) \cdots x_{\gamma_{l-3,l-1}}(t^{p^{(l-3)a}}) \mid t\in k\},\mbox { and } Z = U_{\gamma_{1,l}}.$$
Together these generate a $2$-dimensional unipotent subgroup of $G$, normalised by $T_a$. 

We now show that $X_a = \langle J_a,Y_a,Z\rangle$ acts irreducibly on $W$. Considering the $T_a$-weight spaces of $W$, we see that for $j\ne l$ the root group $U_{\gamma_{i,j}}$ fixes every weight space apart from the $2$-dimensional space spanned by vectors of weights
$-q_{i} - q_{i-1}$ and $-q_{j} - q_{j-1}$, and when $j=l$ the root group fixes every weight space apart from those for the weights  $-q_{i} - q_{i-1}$ and $-2q_{l}$.  Moreover, since $x_{\gamma_{i,j}}(t)$ is a long root element of $G$ (for $t\ne 0$), it acts on $W$ with Jordan blocks $J_2^2 + J_1^{2l-3}$. Combining these two observations we see that the projection of $Y_a(W_j)$ to $W_{j-1}$ is nonzero for $1 < j \leq (l-1)/2$ and the projection of $Y_a(W_{i})$ to $W_{i+1}$ is nonzero for $1 \leq i < (l-1)/2$. Furthermore, $Z$ sends some nonzero vector of $W_{(l+1)/2}$ to a nonzero vector of $W_1$. Since any $X_a$-submodule of $W$ is a sum of a subset of the $W_i$, we see that $X_a$ acts irreducibly on $W$.

We now complete the argument as in previous cases obtaining a $4$-dimensional epimorphic subgroup of $G$.  %The action of $J_a$ on $W$ rules out tensor decomp. Hence $X_b = G$ so that by Lemma~\ref{lem:BB}, $B_bY_bZ$ is  

\subsection{$G$ of type $D_l$ with $l \geq 5$ odd}\label{sec:Dlodd}

Let $P=QL \leq G$ be the standard maximal parabolic subgroup with Levi factor $L$ of type $D_{l-1}T_1$. The derived subgroup $H = [L,L]$ being of type $D_{l-1}$ allows us to use the construction from Section~\ref{sec:Deven}. In particular, there exists $J_b \leq H$ of type $A_1$ and $Y_b \leq H$ a $1$-dimensional unipotent group such that $H = \langle J_b, Y_b \rangle$ (chosen such that a Borel subgroup $B_b$ of $J_b$ and the subgroup $Y_b$ are contained in the standard Borel subgroup of $H$). 

It follows from \cite{ABS} that $L(Q)$ is an $H$-submodule of $L(G)$ isomorphic to $V_H(\lambda_1)$ with highest weight vector $e_{\alpha_0}$. Similarly, if $Q^{\textrm{op}}$ is the unipotent radical of the opposite parabolic containing $L$ then $L(Q^{\textrm{op}})$ is also an $H$-submodule of $L(G)$ isomorphic to $V_H(\lambda_1)$ with highest weight vector $e_{-\alpha_1}$. Note that the root groups $U_{-\alpha_1}$ and $U_{\alpha_0}$ commute. 

Let $X = \langle J_b, Y_b, U_{-\alpha_1}, U_{\alpha_0} \rangle$. It follows from the previous paragraph that $L(X)$ contains $L(H)$, $L(Q)$ and $L(Q^{\textrm{op}})$. Therefore $\dim(L(X)) \geq \dim(L(G)) - 1 $ and so $\dim(X) \geq \dim(G) - 1$. Clearly, by dimension considerations $X$ is not contained in a parabolic subgroup. Since $X$ has rank at least $l-1$, it follows from \cite[Theorem~1]{LSTlargerank} that if $X$ is proper then it is either of type $B_{l-1}$ or a semisimple subsystem subgroup of maximal rank. These are also ruled out by dimension considerations. Therefore Proposition~\ref{prop:BBmethod} yields a $5$-dimensional epimorphic subgroup of $G$, namely $B_b Y_b U_{-\alpha_1} U_{\alpha_0}$.

\begin{rem}
When $l \equiv 3 \pmod 6$ and $p \neq 2$ one can construct a $3$-dimensional epimorphic subgroup with our usual arguments. Indeed, there is a family of subgroups $J_a$ of type $A_1$ diagonally embedded in a subgroup of type $A_1^{\frac{2l}{3}}$ (the natural $\textrm{SO}_3^{\frac{2l}{3}}$ subgroup of $\textrm{SO}_{2l}$) that have an appropriate family of $1$-dimensional unipotent subgroups $Y_a$. 
\end{rem}

\section{Proof of Theorem \ref{mainthm} for $G$ of exceptional type} \label{sec:exceptional}
 
Let $G$ be a simple algebraic group of exceptional type. A subgroup is $G$-irreducible if it is not contained in a proper parabolic subgroup of $G$. The classification of all connected $G$-irreducible subgroups is contained in \cite{ThoMem}; we use this extensively to show certain subgroups are not contained in any maximal connected subgroup of $G$.   

\subsection{$G$ of type $F_4$}

\subsubsection{Characteristic $p \neq 2$}

Let $M \leq G$ be a maximal connected subgroup of type $B_4$ with $9$-dimensional natural module $V$. The group $M$ has a maximal connected subgroup $K$ of type $A_1^2$ acting on $V$ as $(2,2)$. Define $J \leq K$ to be a diagonal subgroup embedded with two distinct twists $1,q$. In \cite{ThoMem}, it is shown that $J$ is $G$-irreducible (denoted by $F_4(\#5)$). Furthermore, one uses Theorem~1 and the explanation in Chapter~11 of \textit{ibid.} to deduce the conjugacy classes of connected overgroups of $J$. In this case, since Table 11.2A does not have a row for the class of subgroups $F_4(\#5)$ we see that the only connected overgroups are of type $A_1^2$ and conjugate to $K$ (the class of $K$ is denoted $F_4(\#29)$). Table 11.2A does contain $F_4(\#29)$ and tells us that the only connected overgroup of $K$ is of type $B_4$. Thus, all maximal connected subgroups containing $J$ are of type $B_4$. 

The action of $M$ on the module $V_G(\lambda_4)$ is 
$$ V_G(\lambda_4) \downarrow M =  \lambda_1 \oplus \lambda_4 \oplus 0^{1-\delta_{p,3}}.$$ 
Restricting to $K$ and then to $J$ we find that 
$$ V_G(\lambda_4) \downarrow J = 2 \otimes 2^{[q]} \oplus W(3) \otimes 1^{[q]} \oplus W(3)^{[q]} \otimes 1 \oplus 0^{1-\delta_{p,3}}.$$ 
(Note that the composition factors for the action of $K$ are given in \cite[Table~12.2]{ThoMem} but one has to do a little more work to obtain the exact restriction.)

We now see that $M$ and $J$ both fix a unique $10$-dimensional summand $N$ of $V_G(\lambda_4)$ when $p > 3$ and a unique $9$-dimensional summand $N$ of $V_G(\lambda_4)$ when $p=3$ . Since $M$ is a maximal subgroup of $G$, the full stabiliser in $G$ of $N$ is $M$. Therefore, it follows that $M$ is the only maximal overgroup of $J$. 

To complete this case we consider the action of $M$ on $L(G)$, which is  
\[ L(G) \downarrow M = \lambda_2 \oplus \lambda_4.\]
Since $M$ is maximal rank, there exists a root $\gamma \in \Phi$ such that $e_\gamma$ is a highest weight vector for the $M$-summand $\lambda_4$. Let $Y = U_\gamma$ and note that $Y$ is not contained in $M$. By construction, a Borel subgroup $B_M$ of $M$ normalises $Y$. Choosing a Borel subgroup $B_J$ of $J$ that is contained in $B_M$, we see that a Borel subgroup of $J$ also normalises $Y$. Let $X = \langle J, Y\rangle$. Since $J$ is a subgroup of $X$ the only possibility for a maximal connected overgroup of $X$ is $M$. But $M$ does not contain $Y$ and therefore we conclude that $X = G$. Therefore, Proposition \ref{prop:BBmethod} implies that $B_J J$ is a $3$-dimensional epimorphic subgroup of $G$. 

\subsubsection{Characteristic $p = 2$} \label{s:F4p2}

Let $K \leq G$ be the subsystem subgroup of type $A_1^2 \tilde{A}_1^2$ corresponding to the root groups $U_{\pm \beta_i}$ for $\beta_1 = \alpha_2$, $\beta_2 = 0120$, $\beta_3 = 1110$, $\beta_4 = 1232$. Then $K$ is a subgroup of the subsystem subgroup $H$ of type $B_2^2$ corresponding to the simple roots $\alpha_2, \alpha_3, 0122, 1110$. One also checks that $H$ is contained in the subsystem subgroups $M_1$ of type $B_4$, and $M_2$ of type $C_4$, with simple roots $-2342, \alpha_1, \alpha_2, \alpha_3$ and $\alpha_2,\alpha_3,\alpha_4, -1232$, respectively. Note that $M_1$ and $M_2$ are interchanged by the standard graph morphism of $G$ but $K$ and $H$ are stabilised. 

Define $J \leq K$ to be a diagonal subgroup embedded via the twists $q_1 = 4$, $q_2 = 32$, $q_3 = 1$, $q_4 = 8$. From \cite{ThoMem}, we find that the subgroup $J$ is $G$-irreducible (denoted $F_4(\#2)$). Table 11.2A in \textit{ibid.} shows that every connected overgroup of $J$ contains a conjugate of $K$ and thus every maximal connected overgroup of $J$ is of type $B_4$ or $C_4$. Furthermore, using that same table we deduce that if $J$ is contained in a subgroup $M$ of type $B_4$ or $C_4$ then the action of $J$ on the natural module of $M$ is the same as the action on the natural module of $M_1$, $M_2$, respectively. In particular, $J$ is contained in a subgroup of type $B_2^2 \leq M$ in both cases. 

Next, we prove that $M_1$ is the unique maximal connected overgroup of type $B_4$ for both $H$ and $J$, from which it follows that $H$ is the unique overgroup of type $B_2^2$ of $J$. To see this we consider the actions of $M_1, H$ and $K$ on $V_{26} = V_{G}(\lambda_4)$. 
\begin{align*} 
& V_{26} \downarrow M_1  = 0 | \lambda_1 | 0 \oplus \lambda_4, \quad V_{26} \downarrow H = 0|( (10,0) \oplus (0,10) )|0 \oplus (01,01), \\
& V_{26} \downarrow K  = (1,1,0,0) \oplus 0|((0,0,2,0) \oplus (0,0,0,2) )|0  \oplus (1,0,1,1) \oplus (0,1,1,1).
\end{align*}
Immediately we see that $M_1$ is the stabiliser in $G$ of the $16$-dimensional summand $V_{M}(\lambda_4)$ of $V_{26}$. Since $H \leq M_1$ stabilises a unique $16$-dimensional summand of $V_{26}$, the only overgroup of $H$ of type $B_4$ is $M_1$. For the second part, we recall that whenever $J$ is contained in a subgroup of type $B_4$ we know its action on the natural module. Its action on $V_{B_4}(\lambda_4)$ is thus determined and is $(1 \otimes 1^{[4]} \otimes 1^{[8]}) \oplus (1 \otimes 1^{[8]} \otimes 1^{[32]})$. We see that $J$ stabilises a unique $16$-dimensional summand of $V_{26}$ with this action so the only overgroup of $J$ of type $B_4$ is $M_1$. 

As mentioned above, $H$ is fixed under the standard graph morphism of $G$ so it follows that $M_2$ is the unique maximal connected overgroup of $H$ of type $C_4$. We then conclude that $M_2$ is the unique overgroup of $J$ of type $C_4$, since $J$ is always contained in an intermediate subgroup of type $B_2^2$ when contained in a subgroup of type $C_4$. 

Let $\gamma_1 = 0110, \gamma_2 = 2342, \gamma_3 = 1221, \gamma_4 = 1231, \gamma_5 = 1220, \gamma_6 = 1342$. One calculates that the roots $\gamma_1, \ldots, \gamma_6$ afford the weights $(1,1,0,0)$, $(0,0,2,2)$, $(1,0,1,1)$, $(0,1,1,1)$, $(1,1,2,0)$, $(1,1,0,2)$, respectively. Furthermore, the root groups $U_{\beta_i}$ for $1 \leq i \leq 4$ centralise each root group $U_{\gamma_j}$ for $1 \leq j \leq 6$. The roots $\gamma_1$ and $\gamma_2$ are both in the root subsystems corresponding to $M_1$ and $M_2$. The roots $\gamma_3$ and $\gamma_4$ are not in the root subsystem corresponding to $M_1$, whereas $\gamma_5$ and $\gamma_6$ are not in the root subsystem corresponding to $M_2$. 

Define $y(t) = x_{\gamma_3}(t) x_{\gamma_6}(t^4)$ and $Y = \{ y(t) \mid t \in k \}$. Then $Y$ is $1$-dimensional since the root groups $U_{\gamma_3}$ and $U_{\gamma_6}$ commute, and $Y$ is not a subgroup of either $M_1$ or $M_2$. By construction, there is a Borel subgroup $B_J$ of $J$ normalising $Y$. Since $M_1$ and $M_2$ are the only maximal connected overgroups of $J$ it follows that $X = \langle J, Y \rangle$ is not contained in any maximal connected subgroup of $G$; thus $X = G$, as required. Therefore, Proposition \ref{prop:BBmethod} implies that $B_J Y$ is a $3$-dimensional epimorphic subgroup of $G$. 

\subsection{$G$ of type $E_6$}

\subsubsection{Characteristic $p \neq 2$}

Let $M$ be a maximal rank subgroup of $G$ of type $A_2^3$. Since $p \neq 2$, each $A_2$-factor has an $A_2$-irreducible subgroup of type $A_1$. Let $K$ be the subgroup of type $A_1^3 \leq M$ where each $A_1$-factor is $A_2$-irreducible. Define a diagonal subgroup $J$ of type $A_1$ embedded via distinct twists $q_1,q_2,q_3$. In \cite{ThoMem}, $J$ is shown to be $G$-irreducible (denoted $E_6(\#3)$) and one deduces that any maximal connected overgroup of $J$ has type $A_2^3$ or $C_4$. 

The action of $M$ on $L(G)$ is as follows 
\[ L(G) \downarrow M = L(A_2^3) \oplus (10,10,10) \oplus (01,01,01).\]
In particular, there exist roots $\gamma_1, \gamma_2$ such that $e_{\gamma_i}$ are highest weight vectors of the final two summands. The root groups $U_{\gamma_1}$ and $U_{\gamma_2}$ commute since $(11,11,11)$ is not a weight for $M$ on $L(G)$. Restricting the action of $M$ on $L(G)$ to $J$ yields
\[ L(G) \downarrow J = 2^{[q_1]} \oplus 2^{[q_2]} \oplus 2^{[q_3]} \oplus W(4)^{[q_1]} \oplus W(4)^{[q_2]} \oplus W(4)^{[q_3]} \oplus (2^{[q_1]} \otimes 2^{[q_2]} \otimes 2^{[q_3]})^2.\]
 
Define $Y =\langle U_{\gamma_1}, U_{\gamma_2} \rangle$. By construction, a Borel subgroup of $M$ and thus a Borel subgroup $B_J$ of $J$, normalises $Y$. The Lie algebra of $X = \langle J,Y \rangle$ contains $e_{\gamma_1}$, $e_{\gamma_2}$ and is a submodule of $L(G)$ under the action of $X$, so in particular under the action of $J$. Thus, it contains the two $27$-dimensional summands with highest weight $2^{[q_1]} \otimes 2^{[q_2]} \otimes 2^{[q_3]}$. Since the only maximal connected overgroups of $J$ have dimension at most $36$, it follows that $X = G$. Therefore, Proposition \ref{prop:BBmethod} implies that $B_J Y$ is a $4$-dimensional epimorphic subgroup of $G$.

\subsubsection{Characteristic $p = 2$} 

Let $J$, $y(t)$ and $Y$ be as in Section~\ref{s:F4p2}. Considering the maximal subgroup $M \leq G$ of type $F_4$ as the centraliser of a standard graph automorphism of $G$, the relevant root elements of $F_4$ expressed as unipotent elements of $E_6$ are
\begin{align*}
x_{\beta_1}(t) = x_{000100}(t), & \quad x_{\beta_2}(t)  = x_{001110}(t), \\
x_{\beta_3}(t) = x_{112211}(t) x_{111221}(t), & \quad x_{\beta_4}(t)  = x_{011100}(t) x_{010110}(t),\\x_{\gamma_3}(t) = x_{111210}(t) x_{011211}(t), & \quad x_{\gamma_6}(t) = x_{112321}(t).
\end{align*}  
A straightforward calculation shows that a Borel subgroup $B_J$ of $J$ normalises $Z = U_{011221}$ and $Y$ commutes with $Z$. Since $M = \langle J,Y \rangle$ is a maximal subgroup of $G$, it follows that $G = \langle J, Y, Z \rangle$. Therefore, Proposition \ref{prop:BBmethod} implies that $B_J Y Z$ is a $4$-dimensional epimorphic subgroup of $G$. 

\subsection{$G$ of type $E_7$}

Let $H \leq G$ be a maximal connected subgroup of type $A_1D_6$ and let $V$ be the $12$-dimensional natural module for the $D_6$-factor.  

\subsubsection{Characteristic $p \neq 2$}

Let $K$ be the subgroup of type $A_1^5 \leq H$ such that the projection of $K$ to $D_6$ acts as $(2,0,0,0) \oplus (0,2,0,0) \oplus (0,0,2,0) \oplus (0,0,0,2)$ on $V$. Define a diagonal subgroup $J$ of type $A_1$ embedded in $K$ via distinct twists $q_1, \ldots, q_5$. In \cite{ThoMem}, this subgroup is proved to be $G$-irreducible (denoted $E_7(\#11)$) and one deduces that every maximal connected overgroup of $J$ is of type $A_1 D_6$. 

%K is 191

By \cite[Lemma~11.8]{LSbook}, the action of $M$ on $V_G(\lambda_7)$ is
\[ V_G(\lambda_7) \downarrow M =  (1,\lambda_1) \oplus (0,\lambda_6),\] and it follows that
\[ V_G(\lambda_7) \downarrow J = 1^{[q_1]} \otimes 2^{[q_2]} \oplus 1^{[q_1]} \otimes 2^{[q_3]} \oplus 1^{[q_1]} \otimes 2^{[q_4]} \oplus 1^{[q_1]} \otimes 2^{[q_5]} \oplus (1^{[q_2]} \otimes \cdots \otimes 1^{[q_5]})^2.\] 

Since $M$ is maximal, it is the stabiliser in $G$ of the $32$-dimensional summand $(0,\lambda_6)$. Looking at $V_G(\lambda_7) \downarrow J$ we find that $J$ stabilises a unique summand of dimension $32$ on $V_G(\lambda_7)$ and hence $M$ is the unique connected maximal overgroup of $J$. 

By \textit{loc. cit.}, \[ L(G) \downarrow M = L(M) \oplus (1, \lambda_5),\]
and we let $e_\gamma$ be a highest weight vector for the summand $(1, \lambda_5)$. Define $Y = U_\gamma$ and note that $Y$ is not contained in $M$. By construction, a Borel subgroup of $M$ normalises $Y$ and so we may assume a Borel subgroup $B_J$ of $J$ normalises $Y$. Since $M$ is the unique maximal overgroup of $J$, it follows that $X = \langle J, Y \rangle$ is not contained in any maximal connected subgroup of $G$. Thus $X = G$, and $B_J Y$ is a $3$-dimensional epimorphic subgroup of $G$.  

\subsubsection{Characteristic $p = 2$}

Let $K$ be the subgroup of type $A_1^6 \leq H$ such that the projection of $K$ to $D_6$ acts as $0 | ( (2,0,0,0,0) \oplus \cdots \oplus (0,0,0,0,2)) | 0$ on $V$. Define a diagonal subgroup $J$ of type $A_1$ embedded in $K$ via distinct twists $q_1, \ldots, q_6$. In \cite{ThoMem}, this subgroup is shown to be $G$-irreducible (denoted $E_7(\#14)$) and as before one uses the tables therein to  deduce that any maximal connected overgroup of $J$ is of type $A_1 D_6$. 

%223
It follows as in the previous case that $M$ is the unique maximal connected overgroup of $J$. The action of $J$ on the $M$-summand $(1, \lambda_6)$ is $1^{[q_1]} \otimes \cdots \otimes 1^{[q_6]}$, and we let $e_{\gamma}$ be a highest weight vector. A Borel subgroup $B_J$ of $J$ normalises $Y = U_\gamma$ and $X = \langle J, Y \rangle$ is again the whole of $G$. Thus $B_J Y$ is a $3$-dimensional epimorphic subgroup of $G$.  

\subsection{$G$ of type $E_8$}

Let $H \leq G$ be a maximal connected subgroup of type $D_8$ with $16$-dimensional natural module $V$.  

\subsubsection{Characteristic $p \neq 2$}

Let $K \leq H$ be the subgroup of type $A_1^6$ acting as $(1,1,0,0,0,0) \oplus (0,0,2,0,0,0) \oplus \cdots \oplus (0,0,0,0,0,2)$ on $V$. Define a diagonal subgroup $J$ of type $A_1$ embedded in $K$ via the twists $q_1 = 1$, $q_2 = p^5$, $q_3 = p$, $q_4 = p^2$, $q_5 = p^3$, $q_6 = p^4$. In \cite{ThoMem}, this subgroup is shown to be $G$-irreducible (denoted $E_8(\#25)$). One deduces from Table~11.5A that any maximal connected overgroup of $J$ is either of type $D_8$ or $A_1 E_7$. 

Note that $K$ is contained in a maximal rank subgroup $A_1^2 A_3^2$ of $H$ which acts on $V$ as $(1,1,000,000) \oplus (0,0,010,000) \oplus (0,0,000,010)$ and 
\[ L(G) \downarrow A_1^2 A_3^2 = L(A_1^2 A_3^2) \oplus (1,1,010,0) \oplus (1,1,0,010) \oplus (0,0,010,010) \oplus \] \[(1,0,100,100) \oplus (1,0,001,001) \oplus (0,1,100,100) \oplus (0,1,001,001).\] 
In particular, there exist four roots $\gamma_1, \gamma_2, \gamma_3, \gamma_4$ such that $e_{\gamma_i}$ is a highest weight vector for the final four summands, respectively. Since the sum of any two of these weights is not a weight for the action of $A_1^2 A_3^2$ on $L(G)$, the corresponding root group elements $x_{\gamma_1}(t), \ldots, x_{\gamma_4}(t)$ pairwise commute.  

Restricting from $A_1^2 A_3^2$ to $K$ and then to $J$ it follows that 
\[ L(G) \downarrow J = 2^{[q_1]} \oplus \cdots \oplus 2^{[q_6]} \oplus 2^{[q_3]} \otimes 2^{[q_4]} \oplus 2^{[q_3]} \otimes 2^{[q_5]} \oplus \cdots \oplus 2^{[q_5]} \otimes 2^{[q_6]} \oplus \] \[ (1^{[q_1]} \otimes 1^{[q_2]} \otimes 2^{[q_3]}) \oplus \cdots \oplus (1^{[q_1]} \otimes 1^{[q_2]} \otimes 2^{[q_6]}) \oplus \] \[ (1^{[q_1]} \otimes 1^{[q_3]} \otimes 1^{[q_4]} \otimes 1^{[q_5]} \otimes 1^{[q_6]})^2 \oplus (1^{[q_2]} \otimes 1^{[q_3]} \otimes 1^{[q_4]} \otimes 1^{[q_5]} \otimes 1^{[q_6]})^2,\]
with the final four summands having highest weight vectors $e_{\gamma_i}$. 

By construction, a Borel subgroup $B_J$ of $J$ normalises the $1$-dimensional unipotent group $Y = \{ x_{\gamma_1}(t)x_{\gamma_3}(t^p)  \mid t \in k \}$. Let $X = \langle J, Y \rangle$. We will show that no maximal connected overgroup of $J$ contains $Y$ so that $X = G$, finishing the proof with the usual argument to show that $B_J Y$ is a $3$-dimensional epimorphic subgroup of $G$.  

As mentioned above, any maximal connected overgroup of $J$ is either of type $D_8$ or $A_1E_7$. Using Table 11.5A in \cite{ThoMem}, we deduce that every $G$-conjugate of $J$ contained in $M$ is contained in a subgroup conjugate to $K$ (denoted $E_8(\#365)$), which itself is contained in a subgroup of type $A_1^2 D_6$ in $M$ (denoted $E_8(\#107)$).

Let $M$ be a maximal connected overgroup of $J$ of type $D_8$ so that \[L(G) \downarrow M = L(M) \oplus \lambda_7.\] A subgroup of type $A_1^2 D_6$ acts on $V_{D_8}(\lambda_7)$ as $(1,1,V_{D_6}(\lambda_5)) + (1,1,V_{D_6}(\lambda_6))$. Therefore the action of $J$ on $V_{D_8}(\lambda_7)$ is $(1^{[q_1]} \otimes 1^{[q_3]} \otimes 1^{[q_4]} \otimes 1^{[q_5]} \otimes 1^{[q_6]})^2 \oplus (1^{[q_2]} \otimes 1^{[q_3]} \otimes 1^{[q_4]} \otimes 1^{[q_5]} \otimes 1^{[q_6]})^2$. There is a unique such $128$-dimensional $J$-summand and so it follows that $M$ is the unique overgroup of type $D_8$ and $M = H$. By construction, $M$ does not contain $Y$ and so $X$ is not contained in $M$. 

Now suppose that $M$ is a maximal connected overgroup of $J$ of type $A_1E_7$ so \[L(G) \downarrow M = L(M) \oplus (1,\lambda_7).\] We will prove that there are exactly two ways to make complementary $J$-summands of $L(G)$ with dimensions $136$ and $112$, that are compatible with $J$ being a subgroup of $M$. This will prove that there are exactly two choices for $M$. The $M$-summand $(1,\lambda_7)$ restricts to ${A}_1^2 D_6$ as either $(1,1,\lambda_1) \oplus (1,0,\lambda_5)$ or $(1,1,\lambda_1) \oplus (1,0,\lambda_6)$. In both cases, the restriction to an appropriate subgroup of type $A_1^6$ (conjugate to $K$) is then $(1,1,2,0,0,0) \oplus \cdots \oplus (1,1,0,0,0,2) \oplus (1,0,1,1,1,1)^2$. Therefore, since $M$ contains $J$ it follows that both summands with highest weight vector $e_{\gamma_1}, e_{\gamma_2}$ are contained in $L(M)$ and both summands with highest weight vector $e_{\gamma_3}, e_{\gamma_4}$ are not, or vice versa. Furthermore, the four $J$-summands $(1^{[q_1]} \otimes 1^{[q_2]} \otimes 2^{[q_3]}), \ldots, (1^{[q_1]} \otimes 1^{[q_2]} \otimes 2^{[q_6]})$ come from $(1,\lambda_7) \downarrow J$. Therefore there are precisely two ways to separate the summands of $L(G) \downarrow J$ into complementary summands of dimensions $136$ and $112$ such that the first can be $L(M) \downarrow J$ and the second $(1,\lambda_7) \downarrow J$. Since $M$ is a maximal subgroup of $G$ it is the stabiliser of this decomposition into a pair of subspaces and so there are at most two overgroups of $J$ of type ${A}_1E_7$. Depending on which subgroup ${A}_1$ of $K$ one chooses to take as the ${A}_1$-factor of ${A}_1 E_7$, there are certainly at least two such subgroups. Hence there are exactly two. From the previous discussion, we see that one overgroup contains $U_{\gamma_1}$ but not $U_{\gamma_3}$ and the opposite is true of the other overgroup. Thus neither can contain $Y$, completing the proof of the claim.  

\subsubsection{Characteristic $p = 2$}

Since $p=2$, there exists a subgroup $K \leq H$ of type $A_1^7$ acting as $0 | ((2,0,0,0,0,0,0) \oplus \cdots \oplus (0,0,0,0,0,0,2)) | 0$ on $V$. Define $J$ of type $A_1$ to be a diagonal subgroup of $K$ with distinct field twists $q_1, \ldots, q_7$. In \cite{ThoMem}, $J$ is shown to be $G$-irreducible (denoted $E_8(\#30)$) and one deduces that all maximal connected overgroups of $J$ are of type $D_8$. 

As before, $L(G) \downarrow D_8 = L(D_8) \oplus V_{D_8}(\lambda_7)$. The action of $J$ on the half-spin module $V_{D_8}(\lambda_7)$ is $1^{[q_1]} \otimes \cdots \otimes 1^{[q_7]}$. In particular, the action of $J$ on $L(G)$ has a unique summand of dimension $128$ and it follows that there is a unique maximal connected overgroup of $J$, namely $H$. Let $e_{\gamma}$ be the highest weight vector of $V_{D_8}(\lambda_7)$. Then $U_\gamma$ is not contained in $H$ and so $X = \langle J, U_\gamma \rangle$ is $G$. Thus $B_J Y$ is a $3$-dimensional epimorphic subgroup of $G$ for any Borel subgroup $B_J \leq J$. 

\section*{Acknowledgements} 

The authors thank Prof. Michel Brion for helpful discussions about the algebraic geometry motivation for epimorphic subgroups and Prof. Gunter Malle for his suggestions on a preliminary version of this paper. 

This work was supported by Swiss National Science Foundation grant number 200020\_207730 (first author) and EPSRC grant EP/W000466/1 (second author).

For the purpose of open access, the authors have applied a Creative Commons Attribution (CC BY) licence to any Author Accepted Manuscript version arising from this submission.

\bibliographystyle{plain}
\bibliography{rsch}

\end{document}